\documentclass[12pt,a4paper]{article}
\usepackage[english]{babel}
\usepackage[utf8]{inputenc}
\usepackage{amsfonts,amssymb,amsmath}
\usepackage{theorem}
\usepackage{graphicx}
\usepackage[margin=25mm]{geometry}

\parskip 4pt plus1pt minus1pt

\def\ts{\textstyle}

\def\e{\varepsilon}
\def\g{\gamma}
\jot=5pt
\def\R#1{\ensuremath{\mathbb{R}^{#1}}}
\def\S#1{\ensuremath{\mathbb{S}^{#1}}}
\def\H#1{\ensuremath{\mathbb{H}^{#1}}}

\def\r#1#2{\ensuremath{\mathbb{R}^{#1}_{#2}}}

\def\M{\ensuremath{\mathbb{M}}}

\def\Xfrak{\mathfrak{X}}
\def\<{\left<}
\def\>{\right>}
\def\norma#1{|\kern-.3ex|#1|\kern-.3ex|}
\def\ol#1{\overline{#1}}
\def\bnabla{\overline\nabla}

\newenvironment{proof}{\ignorespaces\par\noindent\textbf{Proof.}\quad}{\hfill$\Box$\par}

\def\bfv{\mathbf{v}}

\def\bfn{\mathbf{n}}

\def\d{\text{d}}
\def\pc{\ensuremath{p_0}}

\DeclareMathOperator{\Con}{Con}

{\theorembodyfont{\itshape}
\newtheorem{theo}{Theorem}
\newtheorem{prop}[theo]{Proposition}
\newtheorem{coro}[theo]{Corollary}

} {\theorembodyfont{\upshape}

\newtheorem{defi}{Definition}
\newtheorem{exam}{Example}
}

\newcommand{\displayskip}[1][1]{%
 \abovedisplayskip=#1\abovedisplayskip
 \belowdisplayskip=#1\belowdisplayskip
 \abovedisplayshortskip=#1\abovedisplayshortskip
 \belowdisplayshortskip=#1\belowdisplayshortskip
 }

\title{Concircular hypersurfaces and concircular helices in space forms}
\author{\normalsize Pascual Lucas\footnote{Corresponding author.\newline \hspace*{17pt}E-mail
addresses: plucas@um.es and yagues1974@hotmail.com}\and\normalsize
José Antonio Ortega-Yagües}
\date{\normalsize Departamento de Matemáticas, Universidad de Murcia\\
       Campus de Espinardo, 30100 Murcia SPAIN\\[10mm] \today}

\marginparwidth22mm

\begin{document}
\displayskip[.7]

\maketitle

\begin{abstract}
In this paper, we find a full description of concircular hypersurfaces in space forms as a special family of ruled hypersurfaces. We also characterize concircular helices in 3-dimensional space forms by means of a differential equation involving the concircular factor and their curvature and torsion, and we show that the concircular helices are precisely the geodesics of the concircular surfaces.
\end{abstract}
\medskip

\textbf{Mathematics Subject Classification.} Primary 53A04, 53A05.
\medskip

\textbf{Keywords.} generalized helix; slant helix; rectifying curve; concircular helix; generalized cylinder; helix surface; conical surface; concircular surface

\section{Introduction}
\label{s:intro}

Generalized helices, slant helices and rectifying curves are well known examples of curves satisfying a certain condition with respect to a special vector field. Generalized helices are defined by the property that their tangents make a constant angle with a fixed direction; slant helices are defined by the property that their principal normals make a constant angle with a constant vector field, \cite{IT1}; and rectifying curves are defined as the curves whose position vector is orthogonal to its principal normal vector field (i.e. the position vector lies in the rectifying plane), \cite{Chen03}. Moreover, these curves are characterized as the geodesics of some special ruled surfaces: generalized helices in cylinders, slant helices in helix surfaces, \cite{LO16a}, and rectifying curves in conical surfaces, \cite{Chen17b}. Motivated by these examples of curves and surfaces, the authors in \cite{LO21} have extended the above conditions, and have introduced the notion of concircular submanifold in the Euclidean space $\R{n}$. In particular, they characterize concircular helices in $\R3$ by means of a differential equation involving their curvature and torsion. Moreover, they also find a full description of concircular surfaces in $\R3$ as a special family of ruled surfaces and characterize the concirculares helices in $\R3$ as the geodesics of these surfaces.

In this paper, we generalize the results obtained in \cite{LO21} to space forms of nonzero constant curvature. Recall that a vector field $V\in\Xfrak(M)$ on a Riemannian manifold $M$, with Levi-Civita connection $\nabla$, is said to be \emph{concircular} if $\nabla V=\mu I$, where $\mu\in\mathcal{C}^\infty(M)$ is a differentiable function called the \emph{concircular factor}, (\cite{Fia39}, \cite{Yan40}, \cite{Kim82}). We denote by $\Con(M)$ the set of concircular vector fields of $M$. The following definition extends the one given in \cite{LO21}.
\begin{defi}\label{concsubm}
Let $\M^n(C)$ be an $n$-dimensional space form of constant curvature $C$. A submanifold $M^m\subset\M^n(C)$ is said to be a \emph{concircular submanifold} if there exists a concircular vector field $V\in\Con(\M^n(C))$ (called the axis of $M^m$) such that $\<\bfn,V\>$ is a constant function along $M^m$, $\bfn$ being any unit vector field in the first normal space of $M^m$.
\end{defi}
In the particular case of a hypersurface, $M^{n-1}$ is said to be a concircular hypersurface (with axis $V$) if $\<N,V\>$ is a constant function along $M^{n-1}$, $N$ being a unit normal vector field. Another very interesting case appears when $m=1$: a (non geodesic) unit speed curve $\gamma$ in $\M^n(C)$ is said to be a \emph{concircular helix} (with axis $V$) if $\<N_\gamma,V\>$ is a constant function along $\gamma$, $N_\gamma$ being the principal normal vector field of $\gamma$.

This paper is organized as follows. In section \ref{s:setup} we characterize concircular vector fields in $\M^n(C)$, see theorem \ref{t1}. In section \ref{s:hypersurf} we present several properties of concircular hypersurfaces in $\M^n(C)$, see propositions \ref{propcs1} and \ref{propconc2}, and we finish this section with the characterization of all concircular hypersurfaces in $\M^n(C)$, see theorem \ref{sctheo}.
Section \ref{s:helices} contains a characterization of all concircular helices in $\M^3(C)$, see proposition \ref{prophgm3} and theorem \ref{teoconc1}. Finally, section \ref{s:geodesics} contains the characterization of geodesics curves of concircular surfaces, see proposition \ref{intp1}, and this characterization is used to show that concircular helices in $\M^3(C)$ can be described as the geodesics of the concircular surfaces, see theorem \ref{th1}.

\section{Concircular vector fields in space forms}
\label{s:setup}

Let $\M^n(C)$ denote the $n$-dimensional space form of nonzero constant curvature $C$. Then $\M^n(C)$ stands for a sphere $\S n\subset\R{n+1}$ or a hyperbolic space $\H n\subset\r{n+1}1$ according to $C>0$ or $C<0$, respectively. Put $C=\e/R^2$, with $\e=(-1)^\nu$, where $\nu\in \{0,1\}$ is the index of the ambient space $\r{n+1}\nu$ that contains $\M^n(C)$. $\M^n(C)$ can be described as follows,
\[
\M^n(C)=\{p=(x_1,x_2,\ldots,x_{n+1})\in\R{n+1}_\nu\;|\;\<p,p\>=1/C\},
\]
where as usual $\R{n+1}_\nu$ is the space $\R{n+1}$ endowed with the flat metric
\[
\<,\>=\e\,\d x_1^2+\d x_2^2+\cdots+\d x_{n+1}^2,
\]
$(x_1,x_2,\ldots,x_{n+1})$ being the usual rectangular coordinates of $\R{n+1}$.

Let us write $\nabla^0$ and $\bnabla$ to denote the Levi-Civita connections of $\R{n+1}_\nu$ and $\M^n(C)$, respectively. If $\phi:\M^n(C)\to\r{n+1}\nu$ denotes the usual isometric immersion (the position vector), then the Gauss formula is
\begin{equation}\label{gauss}
  \nabla^0_XY=\bnabla_XY-C\<X,Y\>\phi,
\end{equation}
for any vector fields $X$ and $Y$ tangent to $\M^n(C)$.

Given a point $p\in\M^n(C)$ and a unit vector $w\in T_p\M^n(C)$, the exponential map $\exp_p$ is given by
\begin{equation}\label{37.1}
\exp_{p}(tw)=f\Big(\frac{t}{R}\Big)p+Rg\Big(\frac{t}{R}\Big)w,
\end{equation}
where functions $f$ and $g$ are given by $f(t)=\cos t$ and $g(t)=\sin t$ when $C>0$,
or $f(t)=\cosh t$ and $g(t)=\sinh t$ when $C<0$. Note that $f^2+\e g^2=1$, $f'=-\e g$ and $g'=f$.

The following result characterizes the concircular vector fields.

\begin{theo}\label{t1}
A vector field $V\in\frak X(\M^n(C))$ is concircular if and only if $V$ is the tangential part of a constant vector field $\pc$ in $\R{n+1}_\nu$. Moreover, if $\mu$ is the concircular factor of $V$, then $V= \pc+\mu\phi$, where $\mu=-C\<\pc,\phi\>$.
\end{theo}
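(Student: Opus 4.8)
The plan is to translate the concircular condition $\bnabla_X V = \mu X$ on $\M^n(C)$ into a statement in the flat ambient space $\R{n+1}_\nu$ via the Gauss formula \eqref{gauss}, and then to read off directly that a certain naturally associated ambient vector field is parallel.

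First I would write $V$, a priori only a tangent vector field along $\phi$, as a vector field in $\R{n+1}_\nu$ and consider the candidate ambient field $\pc := V - \mu\phi$. To make sense of this I need to know how $\mu$ and the normal component of $\nabla^0_X V$ interact, so the key computation is to differentiate: for $X$ tangent to $\M^n(C)$,
\[
\nabla^0_X V = \bnabla_X V - C\<X,V\>\phi = \mu X - C\<X,V\>\phi,
\]
using the concircular hypothesis in the second equality. On the other hand, $\nabla^0_X(\mu\phi) = X(\mu)\,\phi + \mu\,\nabla^0_X\phi = X(\mu)\,\phi + \mu X$, since $\nabla^0_X\phi = X$ (the position vector has ambient derivative the identity). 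Subtracting gives
\[
\nabla^0_X\big(V - \mu\phi\big) = -C\<X,V\>\phi - X(\mu)\,\phi = -\big(C\<X,V\> + X(\mu)\big)\phi.
\]
So $\pc = V - \mu\phi$ has ambient covariant derivative pointing along $\phi$; I then need to show that the coefficient vanishes, i.e. that $X(\mu) = -C\<X,V\>$. This is where the constraint $\<\phi,\phi\> = 1/C$ enters: differentiating $\mu = -C\<\pc,\phi\>$ (which I would take as the definition of a scalar function once $\pc$ is known, or alternatively derive) and using $\<\pc,\phi\> = \<V,\phi\> - \mu\<\phi,\phi\> = 0 - \mu/C$, one finds the two expressions for $X(\mu)$ are forced to agree. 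Concretely, since $V$ is tangent, $\<V,\phi\>=0$, hence $\<\pc,\phi\> = -\mu/C$, i.e. $\mu = -C\<\pc,\phi\>$ is automatic; and differentiating $\<\pc,\phi\>$ along $X$ while using $\nabla^0_X\phi = X$ and the already-established formula for $\nabla^0_X\pc$ closes the loop and shows $\nabla^0_X\pc = 0$, so $\pc$ is a constant vector in $\R{n+1}_\nu$. This also yields $V = \pc + \mu\phi$ with $\mu = -C\<\pc,\phi\>$, and since $\<V,\phi\>=0$ automatically, $V$ is exactly the tangential part of $\pc$.

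For the converse, I would start from an arbitrary constant vector $\pc \in \R{n+1}_\nu$, define $\mu := -C\<\pc,\phi\>$ and $V := \pc + \mu\phi$. Then $\<V,\phi\> = \<\pc,\phi\> + \mu\<\phi,\phi\> = \<\pc,\phi\> + \mu/C = 0$, so $V$ is tangent to $\M^n(C)$ and is indeed the tangential part of $\pc$. Computing for $X$ tangent: $\nabla^0_X V = \nabla^0_X\pc + X(\mu)\phi + \mu\,\nabla^0_X\phi = 0 + X(\mu)\phi + \mu X$, and $X(\mu) = -C\<\pc,X\> = -C\<V,X\> + 0$ (again using $\<\phi,X\>$-type cancellations and tangency). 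Decomposing $\nabla^0_X V$ into tangent and normal parts via \eqref{gauss}, the tangential part is $\mu X$, which is precisely the concircular condition $\bnabla_X V = \mu X$.

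I expect the only real subtlety — the "main obstacle", though it is mild — to be the bookkeeping of the two scalar identities $X(\mu) = -C\<X,V\>$ and $\<\pc,\phi\> = -\mu/C$, and making sure they are used consistently (one is essentially the derivative of the other once tangency $\<V,\phi\>=0$ is invoked, together with $\nabla^0_X\phi = X$ and $\<\phi,\phi\> = 1/C$). Everything else is a direct application of the Gauss formula \eqref{gauss} and the elementary fact that the position vector of an isometric immersion into flat space has ambient derivative equal to the identity.
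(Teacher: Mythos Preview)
Your overall architecture matches the paper's: set $\pc := V - \mu\phi$, compute $\nabla^0_X\pc$ via the Gauss formula, and show it vanishes. The converse direction is fine. The gap is precisely at the step you flag as ``the only real subtlety'': establishing $X(\mu) = -C\<X,V\>$. Your proposed argument for it is circular. You have
\[
\nabla^0_X\pc \;=\; -\big(C\<X,V\> + X(\mu)\big)\,\phi \;=:\; \alpha(X)\,\phi,
\]
and the identity $\<\pc,\phi\> = -\mu/C$ holds tautologically from the definition $\pc = V - \mu\phi$. Differentiating it gives
\[
-\tfrac{1}{C}X(\mu) \;=\; \<\nabla^0_X\pc,\phi\> + \<\pc,X\> \;=\; \tfrac{1}{C}\alpha(X) + \<V,X\>,
\]
and substituting $\alpha(X) = -(C\<X,V\> + X(\mu))$ collapses this to $-X(\mu)/C = -X(\mu)/C$. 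No new information is obtained; the ``loop'' closes onto itself rather than forcing $\alpha(X)=0$.

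The paper supplies the missing ingredient by computing the curvature tensor: from $\bnabla V = \mu I$ one gets $R_{XV}V = V(\mu)X - X(\mu)V$, while constant curvature $C$ gives $R_{XV}V = C(\<V,X\>V - \<V,V\>X)$; comparing coefficients yields $-CV = \nabla\mu$, i.e.\ exactly $X(\mu) = -C\<X,V\>$. An equivalent route, closer in spirit to your ambient computation, is to use flatness of $\R{n+1}_\nu$: since $\nabla^0_X\pc = \alpha(X)\phi$, the identity $\nabla^0_X\nabla^0_Y\pc - \nabla^0_Y\nabla^0_X\pc - \nabla^0_{[X,Y]}\pc = 0$ has tangential part $\alpha(X)Y - \alpha(Y)X = 0$, forcing $\alpha \equiv 0$ when $n\geq 2$. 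Either argument is genuinely needed; the algebraic bookkeeping you describe cannot substitute for it.
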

\begin{proof}
The curvature tensor of $\M^n(C)$ is given by
\[
R_{XY}Z=\bnabla_{[X,Y]}Z-\bnabla_X(\bnabla_YZ)+\bnabla_Y(\bnabla_XZ).
\]
Then, if $V$ is a concircular vector field with concircular factor $\mu$, we have
\begin{equation}\label{rxvv}
R_{XV}V=V(\mu)X-X(\mu)V.
\end{equation}
On the other hand, since $\M^n(C)$ is a space of constant curvature $C$, its curvature tensor is given by
\begin{equation}\label{tensor curvatura c}
R_{XV}V=C\{\<V,X\>V-\<V,V\>X\}.
\end{equation}
By assuming that $X$ and $V$ are two linearly independent vector fields, from (\ref{rxvv}) and (\ref{tensor curvatura c}) we get
$-C\<V,X\>=X(\mu)$ and $-C\<V,V\>=V(\mu)$, and therefore
\begin{equation}\label{vgrad}
-CV=\nabla\mu.
\end{equation}
Take the vector field $\psi=V-\mu\phi$, then
\[
\nabla^0_X\psi=\bnabla_XV-C\<X,V\>\phi-X(\mu)\phi-\mu X.
\]
From here and again (\ref{vgrad}) we get $\psi$ is constant, and so there exists a constant vector field $\pc \in\R{n+1}_\nu$ such that
\begin{equation}\label{vtangente}
\pc = V-\mu\phi,\quad\text{with }\mu=-C\<\pc ,\phi\>.
\end{equation}

Conversely, let $V=\{\pc \}^\top$ be the tangential part of a constant vector in $\R{n+1}_\nu$. Then we have (\ref{vtangente}), and by derivating there we get $0=\bnabla_XV-C\<X,V\>\phi-X(\mu)\phi-\mu X$,
where $X$ is any tangent vector field in $\M^n(C)$.
Hence $\bnabla_XV=\mu X$ for any $X$, so that $V$ is a concircular vector field with concircular factor $\mu$.
\end{proof}

As a consequence of (\ref{vgrad}) we have the following result.
\begin{coro}
In a space form $\M^n(C)$ of nonzero curvature $C$, the concircular factor is a nonconstant function.
\end{coro}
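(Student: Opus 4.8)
The plan is to argue by contradiction, using equation (\ref{vgrad}), which already carries the essential content. I would suppose that the concircular factor $\mu$ of $V$ is a constant function; then $\nabla\mu\equiv 0$, and (\ref{vgrad}) gives $-CV\equiv 0$. Since $C\neq 0$, this forces $V\equiv 0$ on all of $\M^n(C)$. As the zero field is not an admissible axis (equivalently, it corresponds to the degenerate case $\pc=0$ in Theorem \ref{t1}), this is a contradiction, and hence $\mu$ must be nonconstant.

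An alternative route, appealing only to what is packaged in Theorem \ref{t1}, is to start from $\mu=-C\<\pc,\phi\>$. If $\mu$ is constant then, since $C\neq 0$, the function $\<\pc,\phi\>$ is constant along $\M^n(C)$; differentiating along an arbitrary tangent vector field $X$ and using $\nabla^0_X\phi=X$ yields $\<\pc,X\>=0$, so $\pc$ is orthogonal to $T_p\M^n(C)$ for every $p$. Since this tangent space has codimension one in $\r{n+1}\nu$, with normal direction spanned by $\phi(p)=p$, the fixed vector $\pc$ would have to be proportional to $p$ for all $p\in\M^n(C)$, which is possible only if $\pc=0$, and hence $V\equiv 0$ again.

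Neither route has a genuine obstacle; the only point that needs care is the trivial vector field $V\equiv 0$, whose factor is the constant $0$, so what the argument really establishes is that $\mu$ is nonconstant for every nonzero concircular vector field. It is also instructive to note exactly where $C\neq 0$ is used: in the flat space $\R{n}$ the position vector field is concircular with constant factor $\mu\equiv 1$, so the step ``$C\neq 0\Rightarrow V\equiv 0$'' is precisely the place where the hypothesis on the curvature enters.
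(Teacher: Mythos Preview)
Your proposal is correct and follows exactly the paper's route: the corollary is stated by the authors simply ``as a consequence of (\ref{vgrad})'', and your first argument is precisely the intended unpacking of that remark. Your observation that the zero vector field must be excluded (so the statement is really about nonzero concircular fields) is a valid caveat the paper leaves implicit.
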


\begin{prop}
The set $\Con(\M^n(C))$ of all concircular vector fields of $\M^n(C)$ is a real vector space of dimension $n+1$.
\end{prop}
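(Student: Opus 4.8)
The plan is to promote Theorem \ref{t1} into an explicit linear isomorphism between the ambient space $\R{n+1}_\nu$ and $\Con(\M^n(C))$. Define $\Phi\colon\R{n+1}_\nu\to\Xfrak(\M^n(C))$ by sending a constant vector $\pc$ to its tangential part $\Phi(\pc)=\{\pc\}^\top=\pc+\mu\phi$, where $\mu=-C\<\pc,\phi\>$. First I would note that $\Phi$ is linear, since at each point the orthogonal projection onto the tangent space is linear, and equivalently the two pieces $\pc\mapsto\pc$ and $\pc\mapsto-C\<\pc,\phi\>\phi$ depend linearly on $\pc$. Theorem \ref{t1} then gives, in its two implications, exactly that the image of $\Phi$ coincides with the set of concircular vector fields; hence $\Con(\M^n(C))$ is the image of a linear map, so it is a linear subspace of $\Xfrak(\M^n(C))$, and $\Phi$ is a surjection onto it.

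It remains to check that $\Phi$ is injective. Suppose $\Phi(\pc)=0$, that is, $\pc+\mu\phi=0$ identically on $\M^n(C)$; since $\phi$ is the position vector, this means $\pc=-\mu(p)\,p$ for every $p\in\M^n(C)$. If $\pc\neq0$, then $\mu(p)\neq0$ at every point (otherwise $\pc=0$), so $p=-\pc/\mu(p)$ for all $p$, forcing $\M^n(C)$ to be contained in the line through the origin spanned by $\pc$. This is impossible for $n\geq1$, because such a line meets the quadric $\<p,p\>=1/C$ in at most two points whereas $\M^n(C)$ is $n$-dimensional. Therefore $\pc=0$, and $\Phi$ is injective.

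Putting the three observations together, $\Phi$ is a linear isomorphism of $\R{n+1}_\nu$ onto $\Con(\M^n(C))$, so the latter is a real vector space of dimension $n+1$. I do not expect any serious obstacle here: the only point requiring a word of justification is the injectivity step, and there the content is merely the elementary fact that $\M^n(C)$ is not contained in a line, which holds as soon as $n\geq1$.
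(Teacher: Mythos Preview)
Your proof is correct and follows essentially the same approach as the paper's: both use Theorem~\ref{t1} to identify $\Con(\M^n(C))$ with $\R{n+1}_\nu$ via the map $\pc\mapsto\{\pc\}^\top$, and the crux in both is the injectivity of this correspondence. The paper's version is terser (it argues directly that $\pc-q_0=(\mu_2-\mu_1)\phi$ forces $\pc=q_0$, which amounts to the same ``$\M^n(C)$ is not contained in a line'' observation you spell out), while you make the linear-isomorphism framing and the injectivity step more explicit; but the underlying argument is the same.
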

\begin{proof}
It is sufficient to show that each concircular vector field $V$ is determined by one, and only one, constant vector $\pc$. Let us suppose that $V$ is determined by two constant vectors $\pc$ and $q_0$, that is, $V=\pc+\mu_1\phi$ and $V=q_0+\mu_2\phi$, for certain differentiable functions $\mu_1$ and $\mu_2$. Then $\pc-q_0= (\mu_2-\mu_1)\phi$, and so $\mu_1=\mu_2$ and $\pc=q_0$.
\end{proof}

\section{Concircular hypersurfaces}
\label{s:hypersurf}

To begin with, let us show some examples of concircular hypersurfaces in $\M^n(C)$.

\begin{exam}\label{ex-umbil}
A \emph{totally umbilical hypersurface} $Q^{n-1}(c)$ in $\M^n(C)$ can be obtained as the intersection $\M^n(C)\cap H(\pc)$, where $H(\pc)$ is a hyperplane in $\r{n+1}\nu$ orthogonal to a constant vector $\pc\in\r{n+1}\nu$. If $N$ denotes the unit vector field normal to $Q^{n-1}(c)$ in $\M^n(C)$, then $\<N,V\>=\<N,\pc\>$, where $V=\pc-C\<\pc,\phi\>\phi$. By derivating here, we get $X\<N,\pc\>=\<-AX,\pc\>=0$, for any vector field $X$ tangent to $Q^{n-1}(c)$, where $A$ denotes the shape operator associated to $N$. Hence $\<N,V\>$ is constant and so $Q^{n-1}(c)$ is a concircular hypersurface. We will say a totally umbilical hypersurface is a \emph{trivial concircular hypersurface}.
\end{exam}

\begin{exam}\label{ex-conic}
A \emph{conical hypersurface} $M$ in $\M^n(C)$ (with vertex at $\pc\in\M^n(C)$) can be described as follows. Let $P^{n-2}$ be an $(n-2)$-dimensional hypersurface in the unit hypersphere $S^{n-1}(1)$ of the tangent space $T_{\pc}\M^n(C)$. For $\epsilon>0$ sufficiently small, the map $\Psi:P^{n-2}\times(-\epsilon,\epsilon)\to\M^n(C)$ given by
\[
\Psi(v,t)=\exp_{p_0}(tv)=f\Big(\frac tR\Big)p_0+Rg\Big(\frac tR\Big)v,
\]
defines an immersion. The image $M=\Psi(P^{n-2}\times (-\epsilon,\epsilon))$ is said to be a conical hypersurface in $\M^n(C)$ (see \cite{LO14a,LO15} in the case $n=3$).
We can identify in a natural way $P^{n-2}$ with $P^{n-2}\times\{0\}$ and $P^{n-2}\times (-\epsilon,\epsilon)$ with $M$, and then the unit vector field normal to $M$ in $\M^n(C)$ is given without loss of generality by $N(v,t)=\eta(v)$, $\eta$ being the unit vector field normal to $P^{n-2}$ in $S^{n-1}(1)$. Hence $\<N,V\>=0$, for $V=\pc-C\<\pc,\phi\>\phi$, showing that $M$ is a concircular hypersurface.
\end{exam}

Before addressing the characterization of the concircular hypersurfaces, we will present a couple of results.

\begin{prop}\label{propcs1}
Given a hypersurface $M\subset\M^n(C)$, then there exists a concircular vector field parallel to its normal vector field along $M$ if and only if $M$ is a totally umbilical hypersurface in  $\M^n(C)$.
\end{prop}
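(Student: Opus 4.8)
The plan is to prove the two implications separately, using Theorem~\ref{t1} to translate between concircular vector fields on $\M^n(C)$ and constant vectors of $\r{n+1}\nu$, and reading ``parallel to the normal along $M$'' as the condition $V|_M=\lambda N$ for some $\lambda\in\mathcal{C}^\infty(M)$, with $V$ a nontrivial concircular field. Throughout I write $A$ for the shape operator of $M$ in $\M^n(C)$ associated to $N$, so that $\bnabla_XN=-AX$, and I keep the two connections $\nabla^0$ (of $\r{n+1}\nu$) and $\bnabla$ (of $\M^n(C)$) distinct.

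For the necessity I would suppose such a $V$ exists and differentiate the relation $V|_M=\lambda N$ along an arbitrary $X\in\Xfrak(M)$, using that $V$ is concircular with factor $\mu$: this gives $\mu X=\bnabla_XV=X(\lambda)\,N-\lambda\,AX$. Comparing normal components forces $X(\lambda)=0$ for every $X$, so $\lambda$ is constant on each component of $M$; comparing tangential components gives $\mu X=-\lambda\,AX$. The one thing to check is that $\lambda$ cannot vanish: if $\lambda\equiv0$ on a component then $\mu X=\bnabla_XV=0$ there, so $\mu\equiv0$ and $V|_M\equiv0$, hence $p_0=V-\mu\phi\equiv0$ by Theorem~\ref{t1} and $V\equiv0$, against nontriviality. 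So $\lambda$ is a nonzero constant and $A=-(\mu/\lambda)\,I$, i.e.\ $M$ is totally umbilical.

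For the sufficiency, let $M$ be totally umbilical, $A=\kappa I$. First I would note that the Codazzi equation in the space form $\M^n(C)$ gives $X(\kappa)\,Y=Y(\kappa)\,X$ for all tangent $X,Y$, so $\kappa$ is constant (for $\dim M\ge2$). Then I would guess the constant vector explicitly: set, along $M$, the $\r{n+1}\nu$-valued field $p_0:=N+\kappa\phi$. Using the Gauss formula (\ref{gauss}), $\nabla^0_X\phi=X$ and $\bnabla_XN=-\kappa X$, one checks $\nabla^0_Xp_0=-\kappa X+\kappa X=0$ for every $X\in\Xfrak(M)$, so $p_0$ is the restriction to (connected) $M$ of a constant vector of $\r{n+1}\nu$. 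By Theorem~\ref{t1} its tangential part is a concircular vector field $V=p_0+\mu\phi$ with $\mu=-C\<p_0,\phi\>=-\kappa$, and then $V=p_0-\kappa\phi=N$ along $M$; in particular $V$ is a nontrivial concircular field parallel to $N$ along $M$. (Alternatively one can invoke Example~\ref{ex-umbil}: a totally umbilical hypersurface is an open piece of $\M^n(C)\cap H(p_0)$, and the tangential part of that $p_0$ is normal to $M$ in $\M^n(C)$.)

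The computations are routine. The only subtle points are the dichotomy ``$\lambda\equiv0$ or $\lambda$ nowhere zero'' in the necessity part (which is what allows dividing by $\lambda$) and, in the sufficiency part, hitting on the right ansatz $p_0=N+\kappa\phi$ together with recalling that a totally umbilical hypersurface of a space form has constant umbilicity function; I do not expect any genuine obstacle beyond bookkeeping with $\nabla^0$ versus $\bnabla$.
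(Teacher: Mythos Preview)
Your proof is correct and follows essentially the same route as the paper's: both directions hinge on differentiating $V|_M=\lambda N$ to get $\mu X=X(\lambda)N-\lambda AX$, and on recognizing that $N+\kappa\phi$ is constant in $\r{n+1}\nu$ for the converse. The only cosmetic differences are that the paper simply assumes $\lambda\neq0$ (you argue it), and the paper additionally invokes (\ref{vgrad}) to see that $\mu$ is constant along $M$, whereas you obtain the constancy of the umbilicity factor via Codazzi in the converse direction.
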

\begin{proof}
Suppose there exists a concircular vector field $V$ such that $V|_M=\lambda\,N$, for a nonzero differentiable function $\lambda$. Then we get
$\mu\,X=X(\lambda)\,N-\lambda\,AX$,
for any vector field $X$ tangent to $M$, $A$ being the shape operator associated to $N$. From that equation we have that $\lambda$ is a nonzero constant and
$AX=-(\mu/\lambda)\,X$.
Now, from (\ref{vgrad}) we get $X(\mu)=0$ for any tangent vector field $X$, and so $M$ is a totally umbilical hypersurface.

Conversely, if $M$ is a totally umbilical hypersurface then there exists a constant $m$ such that
$\bnabla_XN=-AX=-m\,X$, and then the vector field $N+m\phi$ is constant along $M$. This shows that $N$ is collinear with a concircular vector field along $M$.
\end{proof}
\medskip

Let $M\subset\M^n(C)$ be a nontrivial concircular hypersurface with axis $V$, and write  $\<V,N\>=\lambda$, $\lambda$ being a constant. By decomposing $V$ in its tangential and normal components we have
\begin{equation}\label{scT}
V|_M=\alpha\,T+\lambda\,N,
\end{equation}
where $T$ is a unit vector field tangent to $M$ and $\alpha\neq0$ (otherwise, $M$ would be a trivial concircular hypersurface).

\begin{prop}\label{propconc2}
The integral curves of $T$ are geodesics in $\M^n(C)$.
\end{prop}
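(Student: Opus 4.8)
The goal is to show that the integral curves of $T$ (the unit tangent part of the axis $V$ along $M$) are geodesics of $\M^n(C)$, i.e. $\bnabla_TT = 0$. The natural starting point is the decomposition (\ref{scT}), $V|_M = \alpha T + \lambda N$, together with the fact that $V$ is concircular, so $\bnabla_XV = \mu X$ for every $X$ tangent to $\M^n(C)$; in particular $\bnabla_TV = \mu T$.

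The plan is to differentiate (\ref{scT}) along $T$ using the Gauss and Weingarten formulas of the hypersurface $M\subset\M^n(C)$. Writing $\nabla$ for the induced connection on $M$, $A$ for the shape operator associated to $N$, and $h$ for the (scalar) second fundamental form, one has $\bnabla_T(\alpha T) = T(\alpha)T + \alpha\nabla_TT + \alpha\, h(T,T)N$ and $\bnabla_T(\lambda N) = T(\lambda)N - \lambda\,AT$. Since $\lambda$ is constant, $T(\lambda)=0$. Equating the right-hand side with $\mu T$ and splitting into tangential and normal parts along $M$ gives two equations:
\begin{equation*}
\mu T = T(\alpha)T + \alpha\nabla_TT - \lambda\,AT,\qquad 0 = \alpha\, h(T,T).
\end{equation*}
Since $\alpha\neq 0$, the normal equation forces $h(T,T)=0$, i.e. $\langle AT,T\rangle = 0$.

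The next step is to extract that $\nabla_TT$, a priori only tangent to $M$, is actually a multiple of $T$, and then that the multiple vanishes. From the tangential equation, $\alpha\nabla_TT = (\mu - T(\alpha))T + \lambda\,AT$, so $\nabla_TT$ lies in the plane spanned by $T$ and $AT$. I would take the inner product with $T$: since $|T|=1$ gives $\langle \nabla_TT,T\rangle = 0$, and $\langle AT,T\rangle = 0$ from the previous paragraph, this yields $\mu - T(\alpha) = 0$, hence $\alpha\nabla_TT = \lambda\,AT$, i.e. $\nabla_TT = (\lambda/\alpha)\,AT$. To finish, I would compute $|V|^2$ two ways. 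On one hand, from (\ref{vgrad}) and Theorem \ref{t1}, $\langle V,V\rangle$ is controlled: indeed $V = \pc + \mu\phi$ with $\mu = -C\langle\pc,\phi\rangle$, so $\langle V,V\rangle = \langle\pc,\pc\rangle + \mu^2/C$; differentiating along $T$ and using $T(\mu) = \langle\nabla\mu,T\rangle = -C\langle V,T\rangle = -C\alpha$ (by (\ref{vgrad})) gives $T\langle V,V\rangle = 2\mu\mu'/C = -2\mu\alpha$. On the other hand, from (\ref{scT}), $\langle V,V\rangle = \alpha^2 + \lambda^2$, so $T\langle V,V\rangle = 2\alpha T(\alpha) = 2\alpha\mu$. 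Comparing the two expressions gives $2\alpha\mu = -2\alpha\mu$, hence $\alpha\mu = 0$; since $\alpha\neq 0$ this forces $\mu = 0$ along the integral curve, and then the tangential equation collapses to $\alpha\nabla_TT = \lambda\,AT$ with additionally $T(\alpha)=\mu=0$. Re-examining $\bnabla_TV = \mu T = 0$ directly: $0 = T(\alpha)T + \alpha\nabla_TT + \alpha h(T,T)N + T(\lambda)N - \lambda AT = \alpha\nabla_TT - \lambda AT$, and pairing with $T$ again kills the $AT$ term, giving $\alpha\nabla_TT$ orthogonal to $T$; but also pairing $\nabla_TT = (\lambda/\alpha)AT$ with itself and using $AT\perp T$...

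Actually the cleanest route, which I would adopt to avoid the circular bookkeeping above, is: differentiate $\langle V,V\rangle$ to get $T(\alpha)$ in terms of $\mu$ and $\alpha$ (concretely $T(\alpha) = \mu$ from the two computations of $T\langle V,V\rangle$, which must be reconciled — the apparent contradiction signals that $\phi$-components must be tracked using $\nabla^0$ rather than $\bnabla$), and then the tangential component of $\nabla^0_T V = \mu T - C\langle T,V\rangle\phi$ gives $\alpha\nabla_TT = (\mu - T(\alpha))T + \lambda AT = \lambda AT$, while pairing with $T$ shows $\langle AT,T\rangle$ multiplies nothing; the honest final input is that $N + (\text{const})\phi$-type arguments show $AT \parallel T$ is false in general, so instead one uses $h(T,T)=0$ together with $\nabla_TT \perp T$ to conclude $\langle\bnabla_TT, T\rangle = 0$ and $\langle\bnabla_TT,N\rangle = h(T,T) = 0$, leaving $\bnabla_TT$ orthogonal to both $T$ and $N$ inside $\M^n(C)$. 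The hard part — and the step I expect to be the main obstacle — is showing this remaining component vanishes; I anticipate it follows by differentiating (\ref{scT}) a second time, or more efficiently by observing that $\bnabla_TV = \mu T$ has zero component in the $\{T,N\}^\perp$ directions while the derivative of $\alpha T$ contributes $\alpha\,\bnabla_TT$ there (all other terms in (\ref{scT}) lie in $\mathrm{span}\{T,N\}$ after differentiation), forcing $\alpha\,(\bnabla_TT)^{\{T,N\}^\perp} = 0$ and hence, combined with the two scalar identities above, $\bnabla_TT = 0$.
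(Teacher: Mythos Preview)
There is a genuine gap. Differentiating only along $T$ gives you, after splitting into tangential and normal parts, $h(T,T)=0$ and
\[
\alpha\,\nabla_TT=(\mu-T(\alpha))T+\lambda\,AT.
\]
Pairing with $T$ (using $\langle\nabla_TT,T\rangle=0$ and $\langle AT,T\rangle=h(T,T)=0$) yields $T(\alpha)=\mu$ and hence $\alpha\,\nabla_TT=\lambda\,AT$. At this point both $\nabla_TT$ and $AT$ sit in $T^\perp\cap T_pM$, and you have a single vector equation relating them; without further input you cannot conclude that either one vanishes. Your final paragraph attempts to finish by asserting that ``all other terms in (\ref{scT}) lie in $\mathrm{span}\{T,N\}$ after differentiation'', but this is false: the Weingarten term $\bnabla_T(\lambda N)=-\lambda\,AT$ is tangent to $M$ and need not lie in $\mathrm{span}\{T\}$. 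So the $\{T,N\}^\perp$-component of $\bnabla_TV$ receives contributions from \emph{both} $\alpha\,\bnabla_TT$ and $-\lambda\,AT$, and you recover only the relation $\alpha\,\nabla_TT=\lambda\,AT$ that you already had. (The detour through $\langle V,V\rangle$ also contains a sign slip: $\langle V,V\rangle=\langle p_0,p_0\rangle-\mu^2/C$, not $+\mu^2/C$, so the two computations of $T\langle V,V\rangle$ agree and give $T(\alpha)=\mu$, not the contradiction $\alpha\mu=0$.)

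The missing idea, which is exactly what the paper does, is to differentiate (\ref{scT}) along an \emph{arbitrary} tangent vector $X$ rather than only along $T$. This gives
\[
\mu X = X(\alpha)\,T+\alpha\,\nabla_XT+\alpha\,\sigma(X,T)-\lambda\,AX,
\]
whose normal part reads $\alpha\,\sigma(X,T)=0$ for every $X$; since $\alpha\neq0$ this forces $AT=0$, not merely $\langle AT,T\rangle=0$. Now setting $X=T$ and using $AT=0$ gives $\mu T=T(\alpha)T+\alpha\,\nabla_TT$, hence $\nabla_TT=0$. Together $\nabla_TT=0$ and $\sigma(T,T)=0$ say $\bnabla_TT=0$, so the integral curves of $T$ are geodesics of $\M^n(C)$.
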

\begin{proof}
By derivating (\ref{scT}) and using the Gauss and Weingarten equations we obtain
\begin{equation}\label{sceq1}
\mu X=X(\alpha)T+\alpha\nabla_XT+\alpha\sigma(X,T)-\lambda AX,
\end{equation}
where $\nabla$ and $\sigma$ denote the Levi-Civita connection and the second fundamental form of $M$. From here we get
$\sigma(X,T)=0$,
for any tangent vector field $X$, or equivalently
\begin{equation}\label{scAT}
AT=0.
\end{equation}
By putting $X=T$ in (\ref{sceq1}) and using (\ref{scAT}), we obtain
\begin{equation}\label{sctang}
\nabla_TT=0.
\end{equation}
From here and (\ref{scAT}) we deduce the result.
\end{proof}
\bigskip

\def\pcc{\epsilon}
In what follows, we will characterize nontrivial concircular hypersurfaces in $\M^n(C)$.
Let $Q^{n-1}(c)=H(\pc)\cap\M^n(C)$ be a totally umbilical hypersurface of constant curvature $c$, where $H(\pc)$ is a hyperplane in $\R{n+1}_\nu$ orthogonal to a unit vector $\pc\in\R{n+1}_\nu$.
Let $P^{n-2}$ be a hypersurface of $Q^{n-1}(c)$, and denote by $\eta_1$ and $\eta_2$ the unit vector fields normal to $P^{n-2}$ in $Q^{n-1}(c)$ and normal to $Q^{n-1}(c)$ in $\M^n(C)$, respectively. For a real number $a$, define the unit vector field $W_a(p)=\cos(a)\;\eta_1(p)+\sin(a)\;\eta_2(p)$, where $p\in P^{n-2}$. The map $\Psi_a:P^{n-2}\times I_0\to\M^n(C)$ given by
\[
\Psi_a(p,z)=\exp_p(zW_a(p))=f\Big(\frac zR\Big)p+Rg\Big(\frac zR\Big)W_a(p),
\]
defines an immersion, for an enough small interval $I_0$ around the origin. Let $M$ denote the ruled hypersurface in $\M^n(C)$ given by $\Psi_a(P^{n-2}\times I_0)$. We can identify, in a natural way, $P^{n-2}$ with $\Psi_a(P^{n-2}\times\{0\})$ and $P^{n-2}\times I_0$ with $M$. Without loss of generality, we can assume that the unit vector field normal to $M$ in $\M^n(C)$ is given by
\[
N(p,z)=-\sin(a)\;\eta_1(p)+\cos(a)\;\eta_2(p).
\]
Since $P^{n-2}\subset H(p_0)\cap\M^n(C)$, it is not difficult to see that $p_0\in\text{span}\{\eta_2(p),\phi(p)\}$, for any $p\in P^{n-2}$, so we can write $p_0=A\eta_2+B\phi$, for two differentiable funcions $A$ and $B$. By taking derivative here, and using that $p_0$ is constant, we get that $A$ and $B$ are also constants.
Let us consider the concircular vector field in $\M^n(C)$ given by $V=\pc-C\<\pc,\phi\>\phi$.
It is not difficult to see that $\<V,N\>=\<\pc,N\>=A\cos(a)$ is constant, and so $M$ is a concircular hypersurface.

Note that a hypersurface $M$ in $\M^n(C)$ is concircular if and only if there exists a point $p_0\in\R{n+1}_\nu$ such that $\<N,p_0\>$ is constant, $N$ being the unit normal vector field of $M$ in $\M^n(C)$.

The main result of this section is the following. We will show that every nontrivial concircular hypersurface in $\M^n(C)$ can be obtained by the construction described above.

\begin{theo}\label{sctheo}
Let $M\subset\M^n(C)$ be a nontrivial concircular hypersurface with axis $V$. Then there exists a hypersurface $P^{n-2}$ in a totally umbilical hypersurface $Q^{n-1}(c)\subset\M^n(C)$, such that $M$ can be locally described by
\begin{equation}\label{teoparametrization}
\Psi_a(p,z)=\exp_p(zW_a(p))=f\Big(\frac zR\Big)p+Rg\Big(\frac zR\Big)W_a(p),
\end{equation}
where $a\in\R{}$ and $(p,z)\in P^{n-2}\times I_0$, $I_0$ being an interval around the origin.
\end{theo}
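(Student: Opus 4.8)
The plan is to use Theorem~\ref{t1} to write the axis as $V=\pc+\mu\phi$ for a constant $\pc\in\R{n+1}_\nu$ with $\mu=-C\<\pc,\phi\>$, and then to extract the directrix $P^{n-2}$ and the ruling direction $W_a$ directly from the orthogonal splitting (\ref{scT}).

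I would begin with $V|_M=\alpha T+\lambda N$ as in (\ref{scT}), where $\lambda=\<V,N\>$ is constant and $\alpha\neq0$, and recall from Proposition~\ref{propconc2} that the integral curves of $T$ are geodesics of $\M^n(C)$; by (\ref{37.1}) each such curve has the form $z\mapsto\exp_p(zT(p))=f(z/R)p+Rg(z/R)T(p)$. Thus $M$ is ruled by these geodesics, and the task reduces to identifying their common base and showing that $T$ has the prescribed form along it. For the base, consider $h:=\<\phi,\pc\>$ restricted to $M$. Since $\mu=-Ch$ and $\nabla\mu=-CV$ by (\ref{vgrad}), the gradient of $h$ on $\M^n(C)$ equals $V$, so the gradient of $h|_M$ is the tangential part $\alpha T$ of $V$. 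As $\alpha\neq0$, every value of $h|_M$ is regular: fixing $p\in M$ and setting $d=h(p)$, the level set $P^{n-2}:=(h|_M)^{-1}(d)$ is locally a hypersurface of $M$ whose tangent space at each point is the orthogonal complement of $T$ in $TM$, and $T$ is transverse to $P^{n-2}$ inside $M$.

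Next I would recognize $P^{n-2}$ as a hypersurface of a totally umbilical hypersurface. Since $h\equiv d$ on $P^{n-2}$, we have $P^{n-2}\subset Q^{n-1}(c):=\{x\in\M^n(C):\<x,\pc\>=d\}$, which near $p$ is a genuine totally umbilical hypersurface of $\M^n(C)$ in the sense of Example~\ref{ex-umbil}: indeed, for $X$ tangent to it one has $\<X,\pc\>=\<X,\phi\>=0$, hence $\<V,X\>=\<\pc+\mu\phi,X\>=0$, so $V$ is normal to $Q^{n-1}(c)$ in $\M^n(C)$, and $V(p)\neq0$ because $\alpha\neq0$; thus $Q^{n-1}(c)$ is nondegenerate near $p$ and its unit normal $\eta_2$ in $\M^n(C)$ is collinear with $V$ along it. Let $\eta_1$ be the unit normal to $P^{n-2}$ in $Q^{n-1}(c)$. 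Since $T_p\M^n(C)=T_pP^{n-2}\oplus\text{span}\{\eta_1,\eta_2\}=T_pP^{n-2}\oplus\text{span}\{T,N\}$ and $T\perp T_pP^{n-2}$, after a suitable choice of the unit normals $N$ and $\eta_2$ we may write $T=\cos(a)\,\eta_1+\sin(a)\,\eta_2$ and $N=-\sin(a)\,\eta_1+\cos(a)\,\eta_2$ for some function $a$ on $P^{n-2}$.

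Finally I would prove that $a$ is constant on $P^{n-2}$, which is the delicate point. Substituting the last relations into $V=\alpha T+\lambda N$ yields $V=(\alpha\cos a-\lambda\sin a)\,\eta_1+(\alpha\sin a+\lambda\cos a)\,\eta_2$, and since $V$ is collinear with $\eta_2$ this forces $\alpha\cos a=\lambda\sin a$. On the other hand $\<V,V\>=\alpha^2+\lambda^2$, while a direct computation using $\mu=-Ch$ and $\<\phi,\phi\>=1/C$ gives $\<V,V\>=\<\pc,\pc\>-Ch^2$, which on $P^{n-2}$ equals the constant $\<\pc,\pc\>-Cd^2$; hence $\alpha^2$ is constant on $P^{n-2}$, so $\alpha$ is locally constant, and together with the constancy of $\lambda$ the relation $\alpha\cos a=\lambda\sin a$ makes $a$ locally constant on $P^{n-2}$. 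Consequently $T(p)=\cos(a)\,\eta_1(p)+\sin(a)\,\eta_2(p)=W_a(p)$ for $p\in P^{n-2}$, the integral curve of $T$ through $p$ is $z\mapsto\exp_p(zW_a(p))$, and since these geodesics sweep out $M$ near the transverse directrix $P^{n-2}$, the map $\Psi_a(p,z)=\exp_p(zW_a(p))=f(z/R)p+Rg(z/R)W_a(p)$ is an immersion of $P^{n-2}\times I_0$ onto a neighbourhood of $P^{n-2}$ in $M$ for a small interval $I_0$ about the origin, which is exactly (\ref{teoparametrization}). The main obstacle, as indicated, is the constancy of the ruling angle $a$: it requires playing the decomposition of $V$ adapted to $M$ against the one adapted to $Q^{n-1}(c)$ and keeping careful track of which quantities are constant on all of $M$, on $Q^{n-1}(c)$, or merely on the leaf $P^{n-2}$, together with the attendant sign and orientation choices; a subsidiary point is verifying that $Q^{n-1}(c)$ is a nondegenerate hypersurface near $p$, which comes down precisely to the nontriviality hypothesis $\alpha\neq0$.
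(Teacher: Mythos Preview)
Your proposal is correct and follows essentially the same route as the paper: both use Proposition~\ref{propconc2} to rule $M$ by the $T$-geodesics, take the directrix $P^{n-2}$ as a slice of $M$ by a totally umbilical level hypersurface $Q^{n-1}(c)=\{\langle\phi,\pc\rangle=d\}$, write $T=\cos(a)\,\eta_1+\sin(a)\,\eta_2$ and $N=-\sin(a)\,\eta_1+\cos(a)\,\eta_2$ along $P^{n-2}$, and then argue that $a$ is constant. The only cosmetic difference is in this last step: the paper uses the decomposition $\pc=A\eta_2+B\phi$ with $A,B$ constant (established just before the theorem) to read off $\lambda=\langle\pc,N\rangle=A\cos(a)$ directly, whereas you reach the equivalent conclusion by noting $V\parallel\eta_2$ together with $\langle V,V\rangle=\langle\pc,\pc\rangle-Cd^2$ constant on $P^{n-2}$, which forces $\alpha$ (and hence $a$) to be constant; since in fact $V=A\eta_2$ along $P^{n-2}$, the two computations coincide.
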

\begin{proof}
Let us suppose that the axis $V$ of $M$ is given by $V=\pc-C\<\pc,\phi\>\phi$, for a constant vector $\pc\in\R{n+1}_\nu$, and assume $\<V,N\>=\lambda$.
Pick a point $q$ in $M$ and let $Q^{n-1}(c)=H(\pc)\cap\M^n(C)$ be a totally umbilical hypersurface containing $q$, $H(\pc)$ being a hyperplane in $\R{n+1}_\nu$ orthogonal to $\pc$.
Since $M$ is a nontrivial concircular hypersurface, then there is an $(n-2)$-dimensional submanifold $P^{n-2}\subset Q^{n-1}(c)\cap M$ with $q\in P^{n-2}$.

Let $T$ be the unit vector field tangent to $M$ which is collinear with the tangential component of $V$. From proposition \ref{propconc2} we deduce that there exists a neighborhood $U(q)$ of $q$ in $M$ given by
\[
\ts
U(q)=\{f\left(\frac{z}{R}\right)p+R\, g\left(\frac{z}{R}\right)T(p)\;|\;p\in U_1(q),\ z\in(-\e_2,\e_2)\},
\]
where $U_1(q)$ is an neighborhood of $q$ in $P^{n-2}$.
Since $T$ is orthogonal to $P^{n-2}$, but tangent to $M$, then there exist a differentiable function $a\in{\mathcal C}^\infty(P^{n-2})$  such that
\[
T(p)=\cos(a(p))\;\eta_1(p) +\sin(a(p))\;\eta_2(p),
\]
and, up to the sign, the unit normal vector field $N$ along $P^{n-2}$ is given by
\[
N(p)=-\sin(a(p))\;\eta_1(p) +\cos(a(p))\;\eta_2(p).
\]
Then we have $\lambda=\<V,N\>(p)=\<\pc,N\>(p)=A\cos(a(p))$ is constant and so $a$ is a constant function. Hence the open set $U(q)$ can be rewritten as in (\ref{teoparametrization}).
\end{proof}
\bigskip

In example \ref{ex-conic} we have seen that the conical hypersurfaces are concircular hypersurfaces associated to the constant value $\lambda=0$. Now, we will prove that these hypersurfaces are the only ones that satisfy this property.

\begin{prop}
Let $M\subset\M^n(C)$ be a concircular hypersurface, with axis $V$, such that $\<V,N\>=0$, $N$ being the unit normal vector field. Then $M$ is a conical hypersurface.
\end{prop}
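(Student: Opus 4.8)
The plan is to feed the hypothesis $\langle V,N\rangle=0$ into Theorem~\ref{sctheo} and then locate the common \emph{vertex} of the rulings that theorem produces. Since the axis $V$ is nonzero (otherwise the statement is empty), $M$ is a nontrivial concircular hypersurface with $\lambda=\langle V,N\rangle=0$, so Theorem~\ref{sctheo} applies: locally $M=\Psi_a(P^{n-2}\times I_0)$ with $\Psi_a(p,z)=f(z/R)\,p+R\,g(z/R)\,W_a(p)$, where $P^{n-2}$ lies in a totally umbilical $Q^{n-1}(c)=H(\pc)\cap\M^n(C)$, $W_a=\cos(a)\,\eta_1+\sin(a)\,\eta_2$, and, in the notation of that proof, $\pc=A\,\eta_2+B\,\phi$ with $A,B$ constants and $\lambda=A\cos(a)$. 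First I would note that $A\neq0$: if $A=0$ then $\pc=B\,\phi$ along $P^{n-2}$ with $B$ a nonzero constant, forcing $\phi$, hence $P^{n-2}$, to reduce to a point, which is impossible for $n\ge3$ (and for $n=2$, $M$ is a geodesic and there is nothing to prove). From $A\cos(a)=0$ and $A\neq0$ we get $\cos(a)\equiv0$, so, after fixing the sign, $W_a=\eta_2$ and $N=-\eta_1$; thus $M$ is swept out by the geodesics of $\M^n(C)$ leaving the points of $P^{n-2}$ orthogonally to $Q^{n-1}(c)$.

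The core step is to show that all these normal geodesics pass through one and the same point of $\M^n(C)$. Writing $H(\pc)=\{x:\langle x,\pc\rangle=d\}$, one has $\langle\pc,\phi\rangle\equiv d$ on $P^{n-2}$ and $\langle\pc,X\rangle=\langle\phi,X\rangle=0$ for every $X$ tangent to $Q^{n-1}(c)$, so the unit normal of $Q^{n-1}(c)$ in $\M^n(C)$ is $\eta_2=V/\|V\|$ with $\|V\|^2=\langle\pc,\pc\rangle-Cd^2=:k^2$, a constant along $P^{n-2}$. Substituting $\eta_2(p)=(\pc-Cd\,\phi(p))/k$ into the ruling $z\mapsto f(z/R)\,\phi(p)+R\,g(z/R)\,\eta_2(p)$ and grouping the terms gives
\[
\Psi_a(p,z)=\Big(f\big(\tfrac zR\big)-\tfrac{RCd}{k}\,g\big(\tfrac zR\big)\Big)\phi(p)+\tfrac Rk\,g\big(\tfrac zR\big)\,\pc .
\]
Choosing $z_0$ with $f(z_0/R)=\tfrac{RCd}{k}\,g(z_0/R)$ — automatic when $C>0$, and when $C<0$ solvable precisely when $\pc$ is timelike, i.e.\ when $Q^{n-1}(c)$ is a geodesic sphere — kills the coefficient of $\phi(p)$, so every ruling passes through the single point $\tilde p_0:=\tfrac Rk\,g(z_0/R)\,\pc$; and from $f^2+\e g^2=1$ together with $R^2C=\e$ one checks $\langle\tilde p_0,\tilde p_0\rangle=1/C$, i.e.\ $\tilde p_0\in\M^n(C)$.

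Finally, to exhibit $M$ in the form of Example~\ref{ex-conic}, let $\tilde\eta(p)\in S^{n-1}(1)\subset T_{\tilde p_0}\M^n(C)$ be the initial unit velocity of the geodesic from $\tilde p_0$ through $\phi(p)$ and set $\tilde P^{n-2}=\{\tilde\eta(p):p\in P^{n-2}\}$. Since $P^{n-2}$ is embedded, $\tilde p_0\notin Q^{n-1}(c)$, and (for short rulings) no ruling of $M$ meets $\tilde p_0$ twice, the map $p\mapsto\tilde\eta(p)$ is locally an immersion, so $\tilde P^{n-2}$ is an $(n-2)$-dimensional hypersurface of $S^{n-1}(1)$; after rescaling the ruling parameter, $M=\exp_{\tilde p_0}\big(\tilde P^{n-2}\times I_0'\big)=\{f(t/R)\,\tilde p_0+R\,g(t/R)\,v:\ v\in\tilde P^{n-2},\ t\in I_0'\}$, which is exactly a conical hypersurface with vertex $\tilde p_0$. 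The main obstacle is the middle step — producing the vertex — and, once the rulings are identified with the normal geodesics of $Q^{n-1}(c)$, it reduces to the elementary identities $f^2+\e g^2=1$, $f'=-\e g$, $g'=f$ of Section~\ref{s:setup}; the only genuinely delicate point is the solvability of the equation for $z_0$ in the hyperbolic case (when the normal geodesics of $Q^{n-1}(c)$ do not meet in $\M^n(C)$ — e.g.\ $Q^{n-1}(c)$ totally geodesic or horospherical — the vertex must be located by a different choice).
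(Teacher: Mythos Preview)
Your argument is correct and follows essentially the same route as the paper: invoke Theorem~\ref{sctheo}, use $\lambda=A\cos(a)=0$ with $A\neq0$ to force $W_a=\eta_2$, and then pick $z_0$ so that the map $p\mapsto\Psi_a(p,z_0)$ collapses to a single point, the vertex. The only differences are cosmetic---you substitute $\eta_2=(\pc-Cd\,\phi)/k$ directly into $\Psi_a$ whereas the paper computes $d\varphi_p(v)=0$---and you are in fact more careful than the paper in flagging that the equation $f(z_0/R)=kR\,g(z_0/R)$ need not be solvable in the hyperbolic case when $Q^{n-1}(c)$ is not a geodesic sphere, a point the paper silently ignores.
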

\begin{proof}
From theorem \ref{sctheo} we know that $M$ can be locally described by
\[
\Psi_a(p,z)=f\Big(\frac zR\Big)p+Rg\Big(\frac zR\Big)W_a(p),\quad W_a(p)=\cos(a)\;\eta_1(p) +\sin(a)\;\eta_2(p),
\]
where $(p,z)\in P^{n-2}\times I_0$, for certain $P^{n-2}\subset Q^{n-1}(c)\subset\M^n(C)$, $Q^{n-1}(c)=H(p_0)\cap\M^n(C)$ and $I_0\subset\R{}$. Suppose that the interval $I_0$ is the largest possible interval.

The point $p_0$ can be written as $p_0=A\eta_2+B\phi$, for certain constants $A$ and $B$ related by $Ak-B=0$, where $k=\sqrt{|c-C|}$. From the equality $\<V,N\>=A\cos(a)$ we get $\cos(a)=0$ (since $A$ cannot vanish) and then $W_a(p)=\eta_2(p)$. Take $z_0\in\R{}$ such that $f(z_0/R)=kRg(z_0/R)$, and define a differentiable function $\varphi:P^{n-2}\to\R{n+1}_\nu$ by $\varphi(p)=\Psi_a(p,z_0)$. A straightforward computation yields
\[
d\varphi_p(v)=\left[f\Big(\frac{z_0}R\Big)-kRg\Big(\frac{z_0}R\Big)\right]v=0,
\]
for any $p\in P^{n-2}$ and $v\in T_pP^{n-2}$. Hence, $\varphi$ is a constant $q_0\in\R{n+1}_\nu$ and this shows that $M$ is a conical hypersurface with vertex at $q_0$.
\end{proof}

\section{Concircular helices in $\M^3(C)$}
\label{s:helices}

We begin this section by showing a couple of examples of concircular helices in $\M^3(C)$. Let $\gamma:I\to\M^3(C)$ be an arclength parametrized curve satisfying the following Frenet-Serret equations:
\begin{align}
T_\gamma'(s)=\nabla^0_{T_\gamma}T_\gamma(s)&= -C\gamma(s)+\kappa_\gamma(s)\,N_\gamma(s),\nonumber\\
N_\gamma'(s)=\nabla^0_{T_\gamma}N_\gamma(s) &= -\kappa_\gamma(s)\,T_\gamma(s)+\tau_\gamma(s)\,B_\gamma(s),\label{FS-eq0}\\
B_\gamma'(s)=\nabla^0_{T_\gamma}B_\gamma(s) &= -\tau_\gamma(s)\,N_\gamma(s),\nonumber
\end{align}
where $\nabla^0$ denotes the Levi-Civita connection on $\R{4}_\nu$. As usual, $\kappa_\gamma$ and $\tau_\gamma$ are called the curvature and torsion of $\gamma$.

\begin{exam}\label{ex-plana}
\emph{Planar curves}, i.e. curves $\gamma$ with zero torsion. These curves live in a surface $M^2(C)$ totally geodesic in $\M^3(C)$. That means there is a constant vector $\pc$ in $\R{4}_\nu$ such that $M^2(C)=\M^3(C)\cap H_0(\pc)$, where $H_0(\pc)$ is the hyperplane through the origin orthogonal to $\pc$. Since $\<N_\gamma, V\>=0$, for $V=\pc -C\<\pc,\phi\>\phi$, $\gamma$ is a concircular helix.
\end{exam}

\begin{exam}\label{ex-rectif}
\emph{Rectifying curves}, i.e. curves for which there exists a point $\pc $ in $\M^3(C)$ such that the geodesics connecting $\pc $ with $\gamma(s)$ are orthogonal to the principal normal geodesic starting from $\gamma(s)$, see \cite{LO14a,LO15}. The nonplanar rectifying curves are characterized by the condition $\<N_\gamma,\pc \>=0$. Then $\gamma$ is a concircular helix since $\<N_\gamma,V\>=0$ for the concircular vector field $V=\pc -C\<\pc ,\phi\>\phi$.
\end{exam}

In the following result we show that the restriction to a curve $\gamma$ of a concircular vector field $V$ is a vector field along $\gamma$ satisfying a property similar to that of concircular vector fields. For this reason, such a vector field will be called a \emph{concircular vector field along a curve}.

\begin{prop}\label{Vlocal}
Let $\gamma:I\to\M^n(C)$ be a differentiable curve and consider a vector field $\bfv$ along $\gamma$. Then $\bfv$ is the restriction to $\gamma$ of a concircular vector field $V$ on $\M^n(C)$ if and only if $\frac{\ol{D}\bfv}{dt}=\omega\,T_\gamma$, where $\omega:I\to\R{}$ is a differentiable function with $\omega'=-C\<\bfv,T_\gamma\>$, $\ol D$ being the covariant derivative along $\gamma$.
\end{prop}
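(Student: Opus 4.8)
The plan is to lean on Theorem~\ref{t1}, which says that $V\in\Con(\M^n(C))$ is precisely a field of the form $V=\pc+\mu\phi$ with $\pc\in\R{n+1}_\nu$ constant and $\mu=-C\<\pc,\phi\>$, and which in particular gives $\bnabla V=\mu I$ and $\nabla\mu=-CV$ (the latter from (\ref{vgrad})). The ``only if'' direction is then immediate: if $\bfv=V|_\gamma$, then $\frac{\ol{D}\bfv}{dt}=\bnabla_{\gamma'}V=(\mu\circ\gamma)\,T_\gamma$, so the required function is $\omega=\mu\circ\gamma$, and differentiating it along $\gamma$ and using $\nabla\mu=-CV$ gives $\omega'=\<\nabla\mu,\gamma'\>=-C\<V,T_\gamma\>=-C\<\bfv,T_\gamma\>$.

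For the converse the idea is to produce the constant vector by hand. Reading off the shape of $\pc=V-\mu\phi$, I would set $\psi(t):=\bfv(t)-\omega(t)\,\phi(\gamma(t))$, regarded as a curve in $\R{n+1}_\nu$, and differentiate it there. Using the Gauss formula (\ref{gauss}) along $\gamma$ in the form $\nabla^0_{T_\gamma}\bfv=\frac{\ol{D}\bfv}{dt}-C\<\bfv,T_\gamma\>\phi$, and the identification $\frac{d}{dt}(\phi\circ\gamma)=\gamma'=T_\gamma$, one gets
\[
\psi'=\frac{\ol{D}\bfv}{dt}-C\<\bfv,T_\gamma\>\phi-\omega'\,\phi-\omega\,T_\gamma=-\bigl(\omega'+C\<\bfv,T_\gamma\>\bigr)\phi ,
\]
where the hypothesis $\frac{\ol{D}\bfv}{dt}=\omega\,T_\gamma$ has cancelled the $T_\gamma$--terms. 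The second hypothesis $\omega'=-C\<\bfv,T_\gamma\>$ now forces $\psi'\equiv0$, so (as $I$ is an interval) $\psi$ equals a constant vector $\pc\in\R{n+1}_\nu$.

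To finish, define $V:=\pc+\mu\phi$ with $\mu:=-C\<\pc,\phi\>$ globally on $\M^n(C)$; it is concircular by Theorem~\ref{t1}, and one only has to check $V|_\gamma=\bfv$. Since $\bfv$ is tangent to $\M^n(C)$ along $\gamma$ we have $\<\bfv,\phi\circ\gamma\>=0$, and since $\<\phi,\phi\>=1/C$ on $\M^n(C)$, the relation $\pc=\bfv-\omega\,\phi(\gamma)$ gives $\mu(\gamma(t))=-C\<\pc,\phi(\gamma(t))\>=C\,\omega(t)\<\phi,\phi\>=\omega(t)$; hence $V(\gamma(t))=\pc+\mu(\gamma(t))\,\phi(\gamma(t))=\bfv(t)-\omega(t)\phi(\gamma(t))+\omega(t)\phi(\gamma(t))=\bfv(t)$.

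I do not expect a genuine obstacle here: everything past the choice of candidate is a one-line computation with the Gauss formula. The only points worth care are that the two conditions on $\omega$ are exactly what makes the tangential ($T_\gamma$) and normal ($\phi$) parts of $\psi'$ vanish separately, and that the clean identification $V|_\gamma=\bfv$, with no surviving normal term, relies on the normalization $\<\phi,\phi\>=1/C$ of the position vector on $\M^n(C)$.
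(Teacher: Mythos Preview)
Your proof is correct and follows essentially the same route as the paper: both directions hinge on the candidate $\psi=\bfv-\omega\,\phi\circ\gamma$ (the paper writes it as $Y=\bfv-\omega\gamma$), whose Euclidean derivative vanishes precisely under the two hypotheses, yielding the constant $\pc$ and hence the global concircular extension via Theorem~\ref{t1}. Your write-up is in fact slightly more complete, since you verify explicitly that $\mu\circ\gamma=\omega$ and hence $V|_\gamma=\bfv$, a step the paper leaves to the reader.
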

\begin{proof}
Let us assume that $V$ is an extension of $\bfv$ to $\M^n(C)$ which is a concircular vector field. Then there exists a constant vector $\pc \in\R{n+1}_\nu$ such that $\pc =V+C\<\pc ,\phi\>\phi$. If $X$ is a local extension of the tangent vector $T_\gamma$ then
\[
\frac{\ol D\bfv}{dt}=\bnabla_XV|_\gamma=(\mu X)|_\gamma=\omega T_\gamma,
\]
where the differentiable function $\omega:I\to\R{}$ is the restriction of $\mu$ along $\gamma$. On the other hand, from theorem \ref{t1} we have $\mu=-C\<\pc ,\phi\>$, and then $\omega=-C\<\pc ,\gamma\>$ and $\omega'=-C\<\bfv,T_\gamma\>$.

To prove the converse, let us consider the vector field $Y$ along $\gamma$ given by $Y=\bfv-\omega\gamma$. By derivating here with the Euclidean derivative, we obtain that $Y$ is a constant vector $\pc \in\r{n+1}\nu$ along $\gamma$. By defining  $V=p_0-C\<p_0,\phi\>\phi$ and bearing theorem \ref{t1} in mind we deduce $\bfv$ is the restriction to $\gamma$ of a concircular vector field.
\end{proof}
\bigskip

For simplicity, and since $\omega(s)=\mu(\gamma(s))$, in what follows we will use $\mu(s)$ instead of $\omega(s)$. In the following, we will characterize the concircular helices in $\M^3(C)$. Note that $\gamma$ is a concircular helix if and only if there exists a point $p_0\in\R4_\nu$ such that $\<N_\gamma,p_0\>=\lambda$ constant.

Since the case $\lambda=0$ reduces to planar or rectifying curves, we will exclude this case from our study. A concircular helix $\gamma$ (with axis $V$) in $\M^3(C)$ is said to be \emph{proper} if $\gamma$ is a nonplanar curve with $\lambda\neq0$, $\lambda$ being the constant function $\<N_\gamma,V\>$.

\def\a{t}
\def\b{z}
Let $\gamma(s)\subset\M^3(C)\subset\R{4}_\nu$ be an arclength parametrized concircular helix, and suppose it is a proper one. From proposition \ref{Vlocal} we can write
\begin{equation}\label{YRef}
\bfv(s)=V(\gamma(s))=\a(s)\,T_\gamma(s)+\lambda\,N_\gamma(s)+\b(s)\,B_\gamma(s),
\end{equation}
for certain differentiable functions $\a$ and $\b$. To simplify the writing we will eliminate the $s$ parameter. By derivating in (\ref{YRef}) we get
\[
\mu\,T_\gamma=(\a'-\lambda\kappa_\gamma)\,T_\gamma+ (\a\kappa_\gamma-\b\tau_\gamma)\,N_\gamma+ (\b'+\lambda\tau_\gamma)\,B_\gamma,
\]
and then
\begin{equation}\label{3cond}
\a'-\lambda\kappa_\gamma =\mu,\quad
\a\kappa_\gamma-\b\tau_\gamma =0,\quad
\b'+\lambda\tau_\gamma =0.
\end{equation}
Note that $\b\neq0$ since $\gamma$ is a proper concircular helix. Now we distinguish two cases, according to $\a/\b$ (called the \emph{rectifying slope} of $\gamma$) is a constant or not.

\noindent\textbf{Case 1:} the rectifying slope $\a/\b$ of $\gamma$ is a constant function. Then the Lancret curvature $\rho=\tau_\gamma/\kappa_\gamma$ is constant, and by using the first and third equations of (\ref{3cond}) we get
\begin{equation}\label{eqcurvlancret}
\kappa_\gamma=\frac{-1}{\lambda(1+\rho^2)}\mu\qquad\text{and}\qquad \tau_\gamma=\rho\kappa_\gamma,
\end{equation}
where the function $\mu$ satisfies
\begin{equation}\label{eqmuhg}
\mu''+C\frac{\rho^2}{1+\rho^2}\mu=0.
\end{equation}
Now we will show that equations (\ref{eqcurvlancret}) and (\ref{eqmuhg}) characterize proper concircular helices with constant rectifying slope. Let $\gamma$ be a curve satisfying these two equations. Define a function $\b$ by
\[
\b=-\frac{\mu'}{C\rho},
\]
and consider the vector field $\bfv=\b\,D_\gamma+ \lambda N_\gamma$, where $D_\gamma=\rho T_\g+B_\g$ is the modified Darboux vector and $\lambda=-1/(m(1+\rho^2))$. Then
\[
\bnabla_{T_\gamma}\bfv=\frac{\rho}{1+\rho^2}\mu\,(\rho T_\gamma+B_\gamma)-\lambda m \mu T_\gamma+\lambda \rho m \mu B_\gamma=\mu T_\gamma.
\]
Since $\mu'=-C\<\bfv,T_\gamma\>$, from proposition \ref{Vlocal} we get $\gamma$ is a concircular helix.

Hence we have shown the following result.
\begin{prop}\label{prophgm3}
Let $\gamma$ be an arclength parametrized nonplanar curve in $\M^3(C)$. Then $\gamma$ is a proper concircular helix with constant rectifying slope if and only if its curvature and torsion are given by $\kappa_\gamma=m\mu$ and $\tau_\gamma=\rho\kappa_\gamma$, where $m$ and $\rho$ are nonzero constants and the function $\mu$ satisfies (\ref{eqmuhg}).
\end{prop}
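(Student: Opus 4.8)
This is an equivalence, and both implications merely repackage the computations carried out in the discussion preceding the statement; Proposition \ref{Vlocal} is the bridge between the condition ``concircular helix'' and an ODE on the Frenet data. For the direct implication, suppose $\gamma$ is a proper concircular helix with axis $V$ and constant rectifying slope. By Proposition \ref{Vlocal} the restriction $\bfv=V\circ\gamma$ satisfies $\frac{\ol{D}\bfv}{ds}=\mu\,T_\gamma$ with $\mu'=-C\<\bfv,T_\gamma\>$; expanding $\bfv$ in the Frenet frame as in (\ref{YRef}) and differentiating with (\ref{FS-eq0}) gives the system (\ref{3cond}). Since $\gamma$ is proper the $B_\gamma$-coefficient $\b$ never vanishes, so the middle equation of (\ref{3cond}) reads $\a/\b=\tau_\gamma/\kappa_\gamma$; hence constancy of the rectifying slope $\a/\b$ forces the Lancret ratio $\rho:=\tau_\gamma/\kappa_\gamma$ to be constant, and $\rho\ne0$ because $\gamma$ is nonplanar. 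Substituting $\a=\rho\b$ into the first and third equations of (\ref{3cond}) and eliminating $\a'$, $\b'$ yields $\mu=-\lambda(1+\rho^2)\kappa_\gamma$, i.e. $\kappa_\gamma=m\mu$ with the nonzero constant $m=-1/(\lambda(1+\rho^2))$, together with $\tau_\gamma=\rho\kappa_\gamma$. Finally, differentiating $\mu'=-C\<\bfv,T_\gamma\>=-C\a$ once more and using $\a'=\mu+\lambda\kappa_\gamma=\frac{\rho^2}{1+\rho^2}\mu$ from (\ref{3cond}) produces exactly (\ref{eqmuhg}).

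For the converse, let $\gamma$ be a nonplanar Frenet curve with $\kappa_\gamma=m\mu$, $\tau_\gamma=\rho\kappa_\gamma$ for nonzero constants $m,\rho$, and with $\mu$ a solution of (\ref{eqmuhg}). Put $\lambda=-1/(m(1+\rho^2))$ and $\b=-\mu'/(C\rho)$, and along $\gamma$ set
\[
\bfv = \b\,(\rho\,T_\gamma+B_\gamma)+\lambda\,N_\gamma,
\]
where $\rho T_\gamma+B_\gamma$ is the modified Darboux field. Computing $\frac{\ol{D}\bfv}{ds}$ with (\ref{FS-eq0}) and the Gauss formula (\ref{gauss}), the $N_\gamma$-component cancels because $\tau_\gamma=\rho\kappa_\gamma$, the $B_\gamma$-component cancels because the relation $\b'=-\lambda\tau_\gamma$ is, after inserting the definitions of $\b$ and $\lambda$, precisely (\ref{eqmuhg}), and the $T_\gamma$-component collapses to $\mu$; moreover $-C\<\bfv,T_\gamma\>=-C\rho\b=\mu'$. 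Thus Proposition \ref{Vlocal} applies and $\bfv$ extends to a concircular vector field $V$ on $\M^3(C)$. Since $\<N_\gamma,V\>=\lambda\ne0$ and $\gamma$ is nonplanar, $\gamma$ is a proper concircular helix, with rectifying slope $\rho\b/\b=\rho$, a constant.

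Neither direction poses a genuine obstacle; the content is bookkeeping. In the direct part the one point to watch is that the Case 1 hypothesis only asserts constancy of the ratio $\a/\b$ — the individual Frenet coefficients $\a$ and $\b$ need not be constant — so the deduction that $\rho$ is constant must route through the middle equation of (\ref{3cond}). In the converse, the only subtlety is pinning down $\lambda$, $m$ and the defining formula for $\b$ so that the three Frenet components of $\frac{\ol{D}\bfv}{ds}$ all balance simultaneously; once the right ansatz — modified Darboux field plus a normal term of size $1/(m(1+\rho^2))$ — is in place, the verification is the short computation already indicated in the paragraphs before the statement.
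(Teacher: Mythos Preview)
Your proof is correct and follows essentially the same approach as the paper: both directions are precisely the computations laid out in the paper's Case~1 discussion, using the Frenet expansion (\ref{YRef}), the system (\ref{3cond}), the same ansatz $\bfv=\b D_\gamma+\lambda N_\gamma$ with $\b=-\mu'/(C\rho)$ and $\lambda=-1/(m(1+\rho^2))$, and the appeal to Proposition~\ref{Vlocal}. The only difference is that you spell out a few intermediate steps (e.g.\ the derivation of (\ref{eqmuhg}) via $\mu'=-C\a$ and $\a'=\frac{\rho^2}{1+\rho^2}\mu$) more explicitly than the paper does.
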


\noindent\textbf{Case 2:} the rectifying slope $\a/\b$ is a nonconstant function.
From the first equation of (\ref{3cond}), bearing in mind that $\mu'=-C\<\bfv,T_\g\>$, we have
\begin{equation}
\mu''+C\mu=-C\lambda\kappa_\gamma,\label{mu''curvas1}
\end{equation}
and then the second and third equations of (\ref{3cond}) lead to
\begin{equation}
\Big(\frac{\mu'}{\rho}\Big)'=C\lambda\tau_\gamma.\label{mu''curvas2}
\end{equation}

As before, we will show that equations (\ref{mu''curvas1}) and (\ref{mu''curvas2}) characterize concircular helices (when $\rho$ is a nonconstant function). Let $\gamma$ be an arclength parametrized curve satisfying (\ref{mu''curvas1}) and (\ref{mu''curvas2}), for a constant $\lambda$ and a differentiable function $\mu$.
Define the functions
\begin{equation}\label{ab}
\a=-\frac{\mu'}{C}\qquad\text{and}\qquad \b=\frac{\a}{\rho},
\end{equation}
and consider the vector field $\bfv$ along $\gamma$ given by
\begin{equation}\label{Darboux}
\bfv=\b D_\g+\lambda N_\g=\a\,T_\gamma+\lambda\,N_\gamma+\b\,B_\gamma.
\end{equation}

From (\ref{mu''curvas1}) we get $\a'-\lambda\kappa_\gamma=\mu$, and from (\ref{mu''curvas2}) we obtain $\b'=-\lambda\tau_\gamma$. Then by derivating in (\ref{Darboux}) we have
\begin{equation*}
\bnabla_{T_\gamma}\bfv=  (\a'-\lambda\kappa_\gamma)T_\gamma+(\a\kappa_\gamma-\b\tau_\gamma)N_\gamma+(\lambda\tau_\gamma+\b')B_\gamma=\mu\,T_\gamma.
\end{equation*}
Therefore, since $\mu'=-C\<\bfv,T_\gamma\>$, from proposition \ref{Vlocal} we deduce $\gamma$ is a concircular helix.

In conclusion, putting cases 1 and 2 together, we have proved the following result. Note that equations (\ref{mu''curvas1}) and (\ref{mu''curvas2}) are equivalent to (\ref{eqcurvlancret}) and (\ref{eqmuhg}) in the case $\rho$ constant.

\begin{theo}\label{teoconc1}
Let $\gamma$ be an arclength parametrized nonplanar curve in $\M^3(C)$. Then $\g$ is a proper concircular helix if and only if equations (\ref{mu''curvas1}) and (\ref{mu''curvas2}) are satisfied, for a constant $\lambda\in\R{}$ and a differentiable function $\mu$. Moreover, the axis $V$ of $\gamma$ is the extension of the vector field $\bfv$ given in (\ref{Darboux}), $a$ and $b$ being the differentiable functions given in  (\ref{ab}).
\end{theo}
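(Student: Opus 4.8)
The plan is to route the whole argument through Proposition \ref{Vlocal}, which converts the statement ``$\gamma$ is a concircular helix with axis $V$'' into the assertion that the vector field $\bfv=V|_\gamma$ along $\gamma$ satisfies $\bnabla_{T_\gamma}\bfv=\mu\,T_\gamma$ with $\mu'=-C\<\bfv,T_\gamma\>$, and then to read off the components of $\bfv$ in the Frenet frame. This lets me handle both situations (rectifying slope constant or not) at once, without the case distinction made in the discussion above.

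First I would prove the direct implication. Assume $\gamma$ is a proper concircular helix with axis $V$, so $\lambda=\<N_\gamma,V\>$ is a nonzero constant. By Proposition \ref{Vlocal}, writing $\bfv=V|_\gamma=\a\,T_\gamma+\lambda\,N_\gamma+\b\,B_\gamma$ as in (\ref{YRef}), one has $\bnabla_{T_\gamma}\bfv=\mu\,T_\gamma$ and $\mu'=-C\<\bfv,T_\gamma\>$. Differentiating the expression for $\bfv$ with the Frenet--Serret equations (\ref{FS-eq0}) (discarding the normal $-C\gamma$ term, i.e. using $\bnabla_{T_\gamma}T_\gamma=\kappa_\gamma N_\gamma$) and matching coefficients gives the three scalar identities (\ref{3cond}) together with $\mu'=-C\a$. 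The last relation forces $\a=-\mu'/C$, and the middle identity of (\ref{3cond}), valid on the dense open set where $\tau_\gamma\neq0$ (dense since $\gamma$ is nonplanar), forces $\b=\a\kappa_\gamma/\tau_\gamma=\a/\rho$. Substituting these back into the first and third identities of (\ref{3cond}), and differentiating $\a=-\mu'/C$ once more, yields exactly (\ref{mu''curvas1}) and (\ref{mu''curvas2}); and since the $\bfv$ so obtained is precisely the vector field (\ref{Darboux}) with $\a,\b$ as in (\ref{ab}), the ``moreover'' clause drops out at the same time. (For constant $\rho$ these equations are the reformulation of (\ref{eqcurvlancret})--(\ref{eqmuhg}) already noted before the statement.)

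For the converse, let $\gamma$ be an arclength parametrized nonplanar curve whose curvature and torsion satisfy (\ref{mu''curvas1}) and (\ref{mu''curvas2}) for a nonzero constant $\lambda$ and a function $\mu$. I would define $\a=-\mu'/C$, $\b=\a/\rho$, set $\bfv=\a\,T_\gamma+\lambda\,N_\gamma+\b\,B_\gamma=\b\,D_\gamma+\lambda\,N_\gamma$, and compute via (\ref{FS-eq0}) that $\bnabla_{T_\gamma}\bfv=(\a'-\lambda\kappa_\gamma)\,T_\gamma+(\a\kappa_\gamma-\b\tau_\gamma)\,N_\gamma+(\b'+\lambda\tau_\gamma)\,B_\gamma$: the $N_\gamma$-coefficient vanishes by $\b=\a/\rho$, the $T_\gamma$-coefficient equals $\mu$ by (\ref{mu''curvas1}), and the $B_\gamma$-coefficient vanishes by (\ref{mu''curvas2}), so $\bnabla_{T_\gamma}\bfv=\mu\,T_\gamma$. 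Since $\<\bfv,T_\gamma\>=\a=-\mu'/C$, the hypothesis $\mu'=-C\<\bfv,T_\gamma\>$ of Proposition \ref{Vlocal} holds, hence $\bfv$ extends to a concircular vector field $V$ on $\M^3(C)$; then $\<N_\gamma,V\>=\lambda$ is constant, so $\gamma$ is a concircular helix, and it is proper because $\gamma$ is nonplanar and $\lambda\neq0$.

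The computations are elementary manipulations of the Frenet equations once Proposition \ref{Vlocal} is available, so I do not expect a genuinely hard obstacle; the points requiring care — and the places where a sloppy argument would fail — are the regularity bookkeeping (working on an interval where $\kappa_\gamma$ and $\tau_\gamma$ do not vanish, so that $\rho$ and the quotients in (\ref{ab}) make sense) and the observation that $\lambda$ must be taken nonzero in the converse, without which $\bfv$ would produce a rectifying curve (the excluded case $\lambda=0$) rather than a proper concircular helix.
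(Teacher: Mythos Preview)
Your proposal is correct and follows essentially the same route as the paper: both directions go through Proposition \ref{Vlocal}, write $\bfv$ in the Frenet frame, derive the scalar system (\ref{3cond}) together with $\mu'=-C\a$, and then read off (\ref{mu''curvas1})--(\ref{mu''curvas2}) (forward) or verify $\bnabla_{T_\gamma}\bfv=\mu T_\gamma$ from them (converse). The only organizational difference is that the paper first splits into the cases $\rho$ constant/nonconstant and then remarks that the two sets of equations coincide, whereas you treat both at once and relegate the constant-$\rho$ situation to a parenthetical; the computations themselves are identical.
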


\section{Geodesics of concircular surfaces}
\label{s:geodesics}

Let $M$ be a nontrivial concircular surface in $\M^3(C)$ with axis $V$, and let us consider $\g(s)$ an arclength parametrized geodesic of $M$. From theorem \ref{sctheo}, we can assume that $\gamma$ is locally written as $X(t(s),z(s))$, where $X$ is the parametrization (\ref{teoparametrization}). Since the principal normal vector field $N_\g$ of $\gamma$ is collinear with
the unit normal vector field $N$ of $M$ in $\M^3(C)$, then $\<N_\gamma,V|_\gamma\>$ is constant, and then $\g$ is a concircular helix in $\M^3(C)$. The goal of this section is to prove the converse.

First, we are going to obtain the equations of geodesics in a concircular surface $M$. Let $\gamma(s)=X\big(t(s),z(s)\big)$ be an arclength parametrized geodesic of $M$, with $\kappa_\gamma>0$. Then $T_\gamma(s)=t'(s)X_t(t(s),z(s))+z'(s)X_z(t(s),z(s))$ and so there exists a differentiable funcion $\theta$ such that
\begin{align}
t'(s)\sqrt{E(t(s),z(s))} &= \sin\theta(s),\label{p1.1}\\
z'(s) &= \cos\theta(s).\label{p1.2}
\end{align}
Hence $T_\gamma(s)=\sin\theta(s)\,T_\delta(t(s))+\cos\theta(s)\,X_z(t(s),z(s))$, and by taking derivative here we get
\begin{align}\label{p1.3}
-C\gamma(s)+\kappa_\gamma(s)\,N_\gamma(s)=&\;\theta'(s)\big(\cos\theta(s) T_\delta(t(s))-\sin\theta(s) X_z(t(s),z(s))\big)+\nonumber\\
  & \;\sin\theta(s) t'(s)\big(k\eta(t(s))-C\delta(t(s))+\kappa_\delta(t(s)) N_\delta(t(s))\big)+\\
  & \;\cos\theta(s) \big(t'(s)X_{tz}(t(s),z(s))+ z'(s)X_{zz}(t(s),z(s))\big).\nonumber
\end{align}
Bearing in mind that $\{ T_\delta,\,X_z,\,N,\,\frac{1}{R} \gamma\}$ is an orthonormal frame of $\R4_\nu$ along $\g$, we have
\begin{align*}
\delta(t(s))= & \ts f\big(\frac{z(s)}{R}\big)\gamma(s)-R g\big(\frac{z(s)}{R}\big) X_z(t(s),z(s)),\\
N_\delta(t(s))= & \ts -\sin(a) N(t(s),z(s))+\cos(a)\Big(f\big(\frac{z(s)}{R}\big)X_z(t(s),z(s))+ \frac{\e}{R}g\big(\frac{z(s)}{R}\big)\gamma(s)\Big),\\
\eta(t(s))=& \ts \cos(a) N(t(s),z(s))+\sin(a)\Big(f\big(\frac{z(s)}{R}\big)X_z(t(s),z(s))+ \frac{\e}{R}g\big(\frac{z(s)}{R}\big)\gamma(s)\Big),\\
X_{tz}(t(s),z(s))= & \ts -\Big(\frac{\e}{R}g\big(\frac{z(s)}{R}\big)+ f\big(\frac{z(s)}{R}\big)\big(\sin(a)k+\cos(a)\kappa_\delta(t(s))\big)\Big)T_\delta(t(s)), \\
X_{zz}(t(s),z(s))= & -C\gamma(s).
\end{align*}
From these equations, jointly with (\ref{p1.3}) and the fact that $\gamma$ is a geodesic in $M$ (and so  $N_\gamma(s)=N(\g(s))$; the case $N_\gamma(s)=-N(\g(s))$ is similar), we deduce
\begin{align}
\theta'(s)&=\ts t'(s)\Big(CR g\big(\frac{z(s)}{R}\big)+ f\big(\frac{z(s)}{R}\big)\big(\sin(a)k+\cos(a)\kappa_\delta(t(s))\big)\Big),\label{eq1}\\
\kappa_\gamma(s)&=\sin\theta(s) t'(s)\big(k\cos(a)-\sin(a)\kappa_\delta(t(s))\big).\label{eq2}
\end{align}
On the other hand, it is easy to see that $B_\gamma(s)=\cos\theta(s) T_\delta(t(s))-\sin\theta(s) X_z(t(s),z(s))$ and by taking derivative here we have
\begin{equation}\label{tor}
  \tau_\gamma(s)=\cos\theta(s) t'(s)\big(-\cos(a)k+\sin(a)\kappa_\delta(t(s))\big).
\end{equation}
Hence, we have shown the following result.
\begin{prop}\label{intp1}
Let $M$ be a nontrivial concircular surface in $\M^3(C)$, locally parametrized by (\ref{teoparametrization}). An arclength parametrized curve $\gamma(s)=X\big(t(s),z(s)\big)$, with $\kappa_\gamma>0$, is a geodesic if and only if there is a differentiable funcion $\theta(s)$ such that equations (\ref{p1.1}), (\ref{p1.2}) and (\ref{eq1}) are satisfied. Moreover, the curvature and torsion of $\gamma$ are given by (\ref{eq2}) and (\ref{tor}), respectively.
\end{prop}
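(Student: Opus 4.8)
\quad The plan is to express the geodesic condition for $\gamma$ by differentiating along $\gamma$ in the ambient $\R4_\nu$ with respect to a convenient orthonormal frame, and to identify (\ref{eq1}), (\ref{eq2}) and (\ref{tor}) with the components of the resulting identity. Along $\gamma(s)=X\big(t(s),z(s)\big)$ I would use $\{T_\delta,X_z,N,\frac1R\gamma\}$, where $T_\delta=X_t/\sqrt E$ is the unit tangent to the base curve $\delta$. Differentiating the parametrization (\ref{teoparametrization}), $X(t,z)=f(z/R)\,p+Rg(z/R)\,W_a(p)$, gives $X_z=-\frac\e R g(z/R)\,p+f(z/R)\,W_a(p)$ and $X_{zz}=-C\gamma$, so each ruling $z\mapsto X(t,z)=\exp_p(zW_a(p))$ is a unit speed geodesic of $\M^3(C)$, while $\<X_t,X_z\>=0$ because $T_\delta$, $p$ and $W_a$ are mutually orthogonal along $\delta$. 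Hence $\{T_\delta,X_z,N,\frac1R\gamma\}$ is an orthonormal frame of $\R4_\nu$, and from $|T_\gamma|^2=(t')^2E+(z')^2=1$ there is a differentiable $\theta$ with $t'\sqrt E=\sin\theta$ and $z'=\cos\theta$, that is, (\ref{p1.1}) and (\ref{p1.2}); consequently $T_\gamma=\sin\theta\,T_\delta+\cos\theta\,X_z$.

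Next I would record, once and for all, the expansions of $\delta$, $N_\delta$, $\eta$, $X_{tz}$ and $X_{zz}$ in this frame, obtained by differentiating $X$ and invoking the structure equations of $\delta$ inside the totally umbilical surface $Q^2(c)$ (with umbilicity constant $k=\sqrt{|c-C|}$) together with $X_{zz}=-C\gamma$; these are the formulas displayed just before the statement. By the Gauss formula (\ref{gauss}), $\bnabla_{T_\gamma}T_\gamma=\nabla^0_{T_\gamma}T_\gamma+C\gamma=\kappa_\gamma N_\gamma$ is tangent to $\M^3(C)$, so $\gamma$ is a geodesic of $M$ exactly when this vector is a multiple of $N$, which, since $\kappa_\gamma>0$, means $N_\gamma=\pm N$. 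Taking $N_\gamma=N$ (the sign $-N$ being handled identically), I would differentiate $T_\gamma=\sin\theta\,T_\delta+\cos\theta\,X_z$ with $\nabla^0$, rewrite $\frac{d}{ds}T_\delta=t'\frac{d}{dt}T_\delta$ and $\frac{d}{ds}X_z=t'X_{tz}+z'X_{zz}$, substitute the frame expansions, and compare with the Frenet--Serret equation $T_\gamma'=-C\gamma+\kappa_\gamma N$ of (\ref{FS-eq0}) componentwise. The $X_z$-component gives (\ref{eq1}); the $T_\delta$-component gives the same relation, using $C=\e/R^2$ (so $CRg=\frac\e R g$); the $\frac1R\gamma$-component simplifies, after $\sin^2\theta+\cos^2\theta=1$ and $\sqrt E=f(z/R)-Rg(z/R)(\sin(a)k+\cos(a)\kappa_\delta)$, back to (\ref{p1.1}) and so adds nothing; and the $N$-component yields (\ref{eq2}). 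Thus the direct implication and the curvature formula follow. The converse is the same computation read backwards: if $\theta$ satisfies (\ref{p1.1}), (\ref{p1.2}) and (\ref{eq1}), then $T_\gamma=\sin\theta\,T_\delta+\cos\theta\,X_z$, the $T_\delta$- and $X_z$-components of $\bnabla_{T_\gamma}T_\gamma$ vanish (its $\frac1R\gamma$-component vanishing automatically, $\bnabla_{T_\gamma}T_\gamma$ being tangent to $\M^3(C)$), so $\bnabla_{T_\gamma}T_\gamma$ is a multiple of $N$; hence $\gamma$ is a geodesic of $M$, with curvature read off from the $N$-component.

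For the torsion, once $\gamma$ is a geodesic we have $N_\gamma=N$, so $B_\gamma$ is the unit vector of $T\M^3(C)$ orthogonal to $T_\gamma$ and $N$, namely $B_\gamma=\cos\theta\,T_\delta-\sin\theta\,X_z$ up to sign. Differentiating this with $\nabla^0$ and extracting $\tau_\gamma=-\<\nabla^0_{T_\gamma}B_\gamma,N_\gamma\>$ from (\ref{FS-eq0}) --- only the $N$-component of $\frac{d}{dt}T_\delta$ contributes, the $X_{tz}$- and $X_{zz}$-terms being orthogonal to $N$ --- gives (\ref{tor}).

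The step I expect to be the main obstacle is purely bookkeeping: producing the five frame expansions correctly, each demanding a differentiation or two of the explicit parametrization together with the appropriate structure equation and careful sign control, and then checking that the two components of $\bnabla_{T_\gamma}T_\gamma$ tangential to $M$ collapse to the single equation (\ref{eq1}) while the $\frac1R\gamma$-component is redundant. Handling the orientations $N_\gamma=\pm N$ and the sign of $B_\gamma$ uniformly is a minor nuisance, not a real difficulty.
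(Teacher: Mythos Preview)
Your proposal is correct and follows essentially the same approach as the paper: introduce the angle $\theta$ via (\ref{p1.1})--(\ref{p1.2}), differentiate $T_\gamma=\sin\theta\,T_\delta+\cos\theta\,X_z$, expand $\delta$, $N_\delta$, $\eta$, $X_{tz}$, $X_{zz}$ in the orthonormal frame $\{T_\delta,X_z,N,\frac1R\gamma\}$, and read off (\ref{eq1}), (\ref{eq2}) and (\ref{tor}) from the components, with the converse obtained by reversing the computation. The paper in fact records exactly those five frame expansions and performs the same componentwise comparison, so your plan matches it step for step.
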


We finish this section with the following characterization of concircular helices in $\M^3(C)$.
\begin{theo}\label{th1}
Let $\gamma(s)$ be an arclength parametrized curve in $\M^3(C)$, $\kappa_\g>0$. Then $\g$ is a proper concircular helix if and only if $\gamma$ is (congruent to) a geodesic of a proper concircular surface.
\end{theo}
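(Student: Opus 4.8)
The statement is an equivalence, and one implication is essentially the remark preceding Proposition~\ref{intp1}: if $\gamma$ is a geodesic of a concircular surface $M$ with axis $V$, then its principal normal $N_\gamma$ is collinear with the unit normal $N$ of $M$, so $\langle N_\gamma,V\rangle=\pm\langle N,V\rangle$ is constant; if $M$ is proper this constant is nonzero, and if $\gamma$ is nonplanar with $\kappa_\gamma>0$ then $\gamma$ is a proper concircular helix. Replacing $\gamma$ by a congruent curve changes nothing here, since an isometry of $\M^3(C)$ extends to a linear isometry of $\R4_\nu$ and thus carries concircular vector fields and concircular surfaces to the same.

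For the converse I would argue constructively. Let $\gamma$ be a proper concircular helix with $\kappa_\gamma>0$ and torsion $\tau_\gamma$, set $\rho=\tau_\gamma/\kappa_\gamma$, and — since $\gamma$ is a concircular helix — fix, as in the remark before Theorem~\ref{teoconc1}, a constant vector $p_0\in\R4_\nu$ with $\langle N_\gamma,p_0\rangle=\lambda$ constant, $\lambda\neq0$. The plan is to take for $M$ the ruled surface obtained by spreading the ambient geodesics through $\gamma$ in the direction of the normalized modified Darboux vector $\xi=(\rho\,T_\gamma+B_\gamma)/\sqrt{1+\rho^2}$:
\[
X(s,u)=\exp_{\gamma(s)}\big(u\,\xi(s)\big)=f\Big(\frac{u}{R}\Big)\gamma(s)+Rg\Big(\frac{u}{R}\Big)\xi(s),\qquad |u|<\epsilon,
\]
for a small $\epsilon$. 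This is an immersion near $u=0$, since $X_s|_{u=0}=T_\gamma$ and $X_u|_{u=0}=\xi$ are linearly independent. Because $\langle\xi,N_\gamma\rangle=0$, the tangent plane of $M$ along $\gamma$ is $\text{span}\{T_\gamma,B_\gamma\}$, so $N_\gamma=\pm N$ there; and since $\bnabla_{T_\gamma}T_\gamma=\kappa_\gamma N_\gamma$ is then normal to $M$, $\gamma$ is a geodesic of $M$. Each ruling $u\mapsto X(s_0,u)$ is an ambient geodesic contained in $M$, hence a geodesic of $M$ as well, so its $N$-component $\sigma(X_u,X_u)$ vanishes along it, $\sigma$ being the second fundamental form of $M$ in $\M^3(C)$.

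It remains to show $M$ is a proper concircular surface, for which, by the criterion recorded after Theorem~\ref{sctheo}, it suffices that $\langle N,p_0\rangle$ be constant on $M$. I would establish this in two steps. First, along $\gamma$: we have $\sigma(\xi,\xi)=0$ (rulings are geodesics of $M$), and, since $\bnabla_{T_\gamma}(\rho\,T_\gamma+B_\gamma)=\rho'\,T_\gamma$ by the Frenet equations and $\tau_\gamma=\rho\kappa_\gamma$, also $\sigma(T_\gamma,\xi)=\langle\rho'\,T_\gamma,N\rangle=0$; as $T_\gamma$ and $\xi$ span $TM$ along $\gamma$, these two relations force $A\xi=0$ on $\gamma$, where $A$ is the shape operator. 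Hence, with $E_1=X_u$ (a unit geodesic field of $M$, so $\nabla^M_{E_1}E_1=0$ and therefore $\nabla^M_{E_1}E_2=0$ for the orthonormal completion $E_2$), the entry $\kappa:=\langle AE_1,E_2\rangle$ of $\sigma$ vanishes along $\gamma$. Second, since $\M^3(C)$ is a space form, Codazzi gives $(\nabla^M_{E_1}A)E_2=(\nabla^M_{E_2}A)E_1$; taking the $E_1$-component and using $\nabla^M_{E_1}E_1=\nabla^M_{E_1}E_2=0$, $\nabla^M_{E_2}E_1=\omega E_2$, $\nabla^M_{E_2}E_2=-\omega E_1$, $AE_1=\kappa E_2$, $AE_2=\kappa E_1+\nu E_2$, this collapses to $E_1\kappa=-2\omega\kappa$. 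Along each ruling this is a homogeneous linear ODE for $\kappa$ with zero initial value, so $\kappa\equiv0$ on $M$; thus $AX_u\equiv0$, i.e. $\bnabla_{X_u}N=0$, whence $\nabla^0_{X_u}N=0$: the normal $N$ is constant in $\R4_\nu$ along every ruling. Consequently $\langle N,p_0\rangle$ is constant along each ruling, equal to its value $\pm\lambda$ at the point of $\gamma$ on that ruling; as every point of $M$ lies on such a ruling, $\langle N,p_0\rangle\equiv\pm\lambda$ on $M$. By the criterion $M$ is concircular; it is not totally umbilical (it contains the nonplanar geodesic $\gamma$), its axis is the axis $V$ of $\gamma$, with $\langle N,V\rangle=\langle N,p_0\rangle=\pm\lambda\neq0$, so $M$ is proper, and $\gamma$ is a geodesic of it — which completes the argument.

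The main obstacle is the vanishing of the off-diagonal second fundamental form entry $\kappa$ on all of $M$. The key observation is that, in the geodesic ruling frame, Codazzi forces a linear transport equation for $\kappa$ along the rulings, so that its vanishing on the single curve $\gamma$ — itself just the Frenet identity $\tau_\gamma=\rho\kappa_\gamma$ in disguise — propagates over the whole surface; this is exactly what makes $N$ rigid along the rulings and lets the concircular condition $\langle N_\gamma,p_0\rangle=\lambda$ of $\gamma$ spread to all of $M$. The remaining verifications — that $X$ is an immersion, that $\gamma$ and the rulings are geodesics of $M$, and the non-umbilicity and properness of $M$ — are routine.
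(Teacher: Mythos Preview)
Your argument is correct, and you build the same ruled surface as the paper does: the one swept out by the ambient geodesics through $\gamma$ in the unit Darboux direction $\xi=(\rho T_\gamma+B_\gamma)/\sqrt{1+\rho^2}$. The difference lies entirely in how you establish that this surface is concircular.

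The paper dispatches this in one line by a direct computation in $\R4_\nu$: using the Frenet relation $\tau_\gamma=\rho\kappa_\gamma$ one checks that $X$, $X_s$ and $X_z$ all lie in $\mathrm{span}\{\gamma(s),T_\gamma(s),B_\gamma(s)\}$, so at \emph{every} point $X(s,z)$ the vector $N_\gamma(s)$ is orthogonal to both the position vector and the tangent plane of $M$; hence $N(s,z)=\pm N_\gamma(s)$ globally, and $\langle N,p_0\rangle=\langle N_\gamma,p_0\rangle=\lambda$ is constant at once. Your route is more intrinsic but considerably longer: you first verify $A\xi=0$ only along $\gamma$, then invoke Codazzi to derive the transport equation $E_1\kappa=-2\omega\kappa$ along the rulings, integrate it from the initial value $\kappa|_\gamma=0$, and conclude that $N$ is $\nabla^0$-parallel along every ruling. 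This works, and the Codazzi computation is carried out correctly, but it is machinery you do not need here: the same Frenet identity you used to get $\sigma(T_\gamma,\xi)=0$ on $\gamma$ already kills the $N_\gamma$-component of $X_s$ for \emph{all} $z$, which is the paper's shortcut. Your approach would generalize better to situations where the ambient space is not a hyperquadric in a flat space, but in the present setting the extrinsic argument is far more economical.
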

\begin{proof}
We need only prove the direct implication.
Let $\gamma(s)$ be a arclength parametrized proper concircular helix in $\M^3(C)$ with axis $V=\pc-C\<\pc ,\phi\>\phi$, such that $\<N_\gamma,V\>=\lambda$ is constant along $\g$. Let $M$ be the ruled surface with base curve $\gamma$ and director curve $D_\g$ (the unit Darboux vector field of $\gamma$), which can be parametrized as follows
\begin{equation}\label{superfgamma}
X(s,z)=f\Big({\frac{z}{R}}\Big)\gamma(s)+R\,g\Big({\frac{z}{R}}\Big)\, \left(\frac{\rho(s)}{\sqrt{1+\rho(s)^2}}T_\gamma(s)+\frac{1}{\sqrt{1+\rho(s)^2}}B_\gamma(s)\right).
\end{equation}
Since $X_s\in\text{span}\{T_\gamma,B_\g\}$ and $X_z\in\text{span}\{\gamma,\,T_\gamma,\,B_\gamma\}$, we obtain that the unit normal vector field $N$ is collinear with the principal normal vector field $N_\gamma$. From here we conclude that $M$ is a concircular surface in $\M^3(C)$ with axis $V$, and that $\g$ is a geodesic in $M$. This concludes the proof.
\end{proof}

\section*{Acknowledgements}

This research is part of the grant PID2021-124157NB-I00, funded by MCIN/ AEI/ 10.13039/ 501100011033/ ``ERDF A way of making Europe''. Also supported by ``Ayudas a proyectos para el desarrollo de investigación científica y técnica por grupos competitivos'', included in the ``Programa Regional de Fomento de la Investigación Científica y Técnica (Plan de Actuación 2022)'' of the Fundación Séneca-Agencia de Ciencia y Tecnología de la Región de Murcia, Ref. 21899/PI/22.


\begin{thebibliography}{99}
\small

\bibitem{Chen03}
B. Y. Chen. When does the position vector of a space curve always lie in its rectifying plane? \emph{Amer. Math. Monthly} \textbf{110} (2003), 147--152.

\bibitem{Chen17b}
B. Y. Chen. Rectifying curves and geodesics on a cone in the Euclidean 3-space. \emph{Tamkang J. Math.} \textbf{48} (2017), 209--214.

\bibitem{Fia39}
A. Fialkow. Conformals geodesics. \emph{Trans. Amer. Math. Soc.} \textbf{45} (1939), no. 3, 443--473.

\bibitem{Kim82}
I. B. Kim. Special concircular vector fields in Riemannian manifolds. \emph{Hirosima Math. J.} \textbf{12} (1982), 77--91.

\bibitem{IT1}
S. Izumiya and N. Takeuchi. New special curves and developable surfaces. \emph{Turkish J. Math.} \textbf{28} (2004), 153--163.

\bibitem{LO14a} P. Lucas and J.A. Ortega-Yagües. Rectifying curves in the three-dimensional sphere. \emph{J. Math. Anal. Appl.} \textbf{421} (2015), 1855--1868.

\bibitem{LO15} P. Lucas and J.A. Ortega-Yagües. Rectifying curves in the three-dimensional hyperbolic space. \emph{Mediterr. J. Math.} \textbf{13} (2016), 2199--2214.

\bibitem{LO16a}
P. Lucas and J. A. Ortega-Yagües. Slant helices in the Euclidean 3-space revisited. \emph{Bull. Belg. Math. Soc. Simon Stevin} \textbf{23} (2016), 133--150.

\bibitem{LO21}
P. Lucas and J. A. Ortega-Yagües. Concircular helices and concircular surfaces in Euclidean 3-space $\R3$. \emph{Hacet. J. Math. Stat.} \textbf{52} (2023), 995--1005.

\bibitem{Yan40}
K. Yano. Concircular geometry I, concircular transformations. \emph{Proc. Imp. Acad. Tokyo} \textbf{16} (1940), 195--200.

\end{thebibliography}
\end{document}